\documentclass[10pt]{article} 
\usepackage[preprint]{tmlr}

\usepackage{hyperref}
\usepackage{url}
\usepackage{graphicx} 
\usepackage{natbib}

\usepackage{tabularx} 
\usepackage{booktabs} 

\usepackage{amsmath,amsthm,amssymb,mathtools}
\usepackage{enumitem}
\usepackage{xcolor}

\newcommand{\R}{\mathbb{R}}
\newcommand{\RR}{\mathbb{R}} 
\DeclareMathOperator{\Projr}{Proj_r}
\DeclareMathOperator*{\argmin}{argmin}
\DeclareMathOperator*{\trace}{trace}

\DeclareMathOperator*{\rank}{rank}
\def\EE{\mathbb{E}}
\renewcommand{\vec}[1]{\boldsymbol{#1}} 
\newcommand{\x}{\vec{x}}
\newcommand{\z}{\vec{z}}
\newcommand{\y}{\vec{y}}

\newcommand{\cS}{\mathcal{Z}} 
\newcommand{\cZ}{\mathcal{Z}}

\newcommand{\euc}{{}}
\newcommand{\fro}{{}} 
\newcommand{\ZZ}{z} 

\newtheorem{theorem}{Theorem}

\newtheorem{lemma}{Lemma}

\newtheorem{definition}{Definition}
\newtheorem{assumption}{Assumption}

\theoremstyle{remark}

\newtheorem{remark}{Remark}

\usepackage{newfloat}
\usepackage{listings}

\usepackage{booktabs}

\title{Gradient-Free Optimization for Matrix functions}

\author{\name Sawyer Allen \email sawyer\_allen@mines.edu \\
      \addr Unaffiliated
      \tmlrAND
      \name Cash Cherry \email ccherry@ucsb.edu \\
      \addr Department of Mathematics, UCSB
      \tmlrAND
      \name Aidan Eck \email aeck1@mines.edu \\
      \addr Unaffiliated
      \tmlrAND
      \name Stephen Becker \email stephen.becker@colorado.edu \\
      \addr Department of Applied Mathematics, CU Boulder
      \tmlrAND
      \name Daniel McKenzie \email dmckenzie@mines.edu \\
      \addr Department of Applied Mathematics and Statistics, Colorado School of Mines
      }
\let\tmlrAND\AND           
\let\AND\relax 
\usepackage{algorithm}
\usepackage{algorithmic}

\begin{document}

\maketitle

\begin{abstract}

We consider the task of optimizing smooth, possibly non-convex functions of a matrix variable given access only to directional derivatives rather than full gradients. This setting arises when fine-tuning large neural networks on consumer-grade hardware: the network's weights are matrices, memory constraints rule out backward-mode automatic differentiation, but directional derivatives remain available through forward mode.

We frame gradient estimation in this setting as a structured recovery problem, in the spirit of signal processing. From this perspective we provide three contributions. First, we introduce an alternative to the standard random gradient estimator; the difference corresponds to replacing the adjoint of the sampling operator with its pseudoinverse. Second, when the gradient satisfies an approximate low-rank condition, techniques from matrix sensing yield a family of highly accurate gradient estimators that drop into any first-order method. Third, we note that while the computational cost of such estimators is high, this can be amortized by combining them with a matrix-aware optimizer such as spectral descent. Specifically, the gradient estimator computes a factorization of the gradient, allowing for the projection step of spectral descent to be done at no extra cost.

We demonstrate our findings with two careful numerical experiments on synthetic functions with approximately low-rank gradients. We show that by exploiting this low-rank property one obtains much faster convergence to good approximate solutions.

\end{abstract}


\section{Introduction}
\label{sec:introduction}
In this paper we consider the following optimization problem
\begin{equation}
    \min_{X \in \R^{m\times n}} f(X)
    \label{eq:minimization_problem}
\end{equation}
where $f: \R^{m \times n} \to \R$ is Lipschitz differentiable. That is, we are interested in settings where the input to $f$ can naturally be thought of as a matrix. We shall call such functions {\em matrix functions}. Matrix functions arise in problems such as sensor localization \cite{tasissa2018exact}, quantum state tomography \cite{gross2010quantum}, and the training/fine-tuning of neural networks. While matrix functions may be reduced to the more familiar form $\tilde{f}: \mathbb{R}^N \to \mathbb{R}$ by defining $N = mn$ and simply stacking the columns of $X \in \mathbb{R}^{m\times n}$ to form a vector $\vec{x} \in \mathbb{R}^N$, retaining the matrix structure of $X$ allows for specialized optimization algorithms that may outperform generic, vectorized ones.

We study \eqref{eq:minimization_problem} in the gradient-free optimization (GFO) setting, where the full-gradient $\nabla f(X)$ is inaccessible but directional derivatives,
\begin{equation*}
    D_Zf(X):= \lim_{h\to 0}\frac{f(X + hZ) - f(X)}{h},
\end{equation*}
are readily available. Such settings frequently arise when finetuning large language models (LLMs) due to the asymmetry in memory cost between evaluating the model (a forward pass or inference) and computing a gradient via reverse-mode automatic differentiation (a backward pass). The former is possible on consumer-grade hardware, while the latter frequently is not \cite{zhang2022opt}. On the other hand, a directional derivative can be evaluated using forward-mode automatic differentiation and has a memory cost similar to that of a forward pass.

In the closely related setting of derivative-free optimization (DFO), even directional derivatives are inaccessible \cite{conn2009introduction}. However, directional derivatives may be approximated using finite differences:
\begin{equation}
   \frac{f(X+hZ) - f(X)}{h} \approx D_{Z}f(X), 
\end{equation}
for sufficiently small $h > 0$. Consequently GFO methods may be extended to this setting, see Appendix~\ref{ap:connection_to_dfo}. DFO---and to a lesser extent, GFO---for matrix functions has recently received much attention \cite{zhang2024revisiting,chenenhancing2025,lang2026powering,petrov2025leveraging}, largely applied to the problem of LLM finetuning. This paper explores the topic from a theoretical, problem-agnostic perspective so as to expand the design space for such algorithms.

Previous literature has explored gradient-free methods for functions of a vector input \cite{nesterov2017random,baydin2022gradients,belouze2022optimization,flugel2025beyond,kozak2021stochastic}. A common bottleneck to applying gradient-free methods to large problems is that, for generic objective functions $f$, they typically require $O(N)$ directional derivative evaluations to find an acceptable solution to \eqref{eq:minimization_problem}. Consequently, many works in GFO and DFO have proposed various notions of {\em low intrinsic dimensionality}, such as gradient sparsity, that can be exploited to design algorithms capable of finding an approximate solution using a sublinear-in-$N$ number of queries \cite{balasubramanian2018zeroth,wang2018stochastic,cai2021zeroth,cai2022zeroth,balasubramanian2022zeroth,qiugradient,grapiglia2025fully,yue2023zeroth,cartis2022dimensionality,li2023stochastic,malladi2023fine}. 

We frame the problem of estimating $\nabla f(X)$ as a matrix recovery problem. This leads us to a notion of low intrinsic dimensionality adapted to matrix functions: approximately low-rank gradients. We provide evidence that this property occurs in applications of interest, specifically LLM finetuning. Drawing upon existing tools from signal processing, we provide several new GFO algorithms---some of which exploit low-rank gradients, and some which are insensitive to this. We demonstrate the performance of such approaches in a controlled experiment using two toy problems. We conclude with some thoughts on how to extend these approaches to large-scale problems.   The accompanying code is available at \href{https://github.com/mines-opt-ml/gradient_free_matrix_opt}{https://github.com/mines-opt-ml/gradient\_free\_matrix\_opt}.

\begin{figure*}
    \centering
    \includegraphics[width=0.32\linewidth]{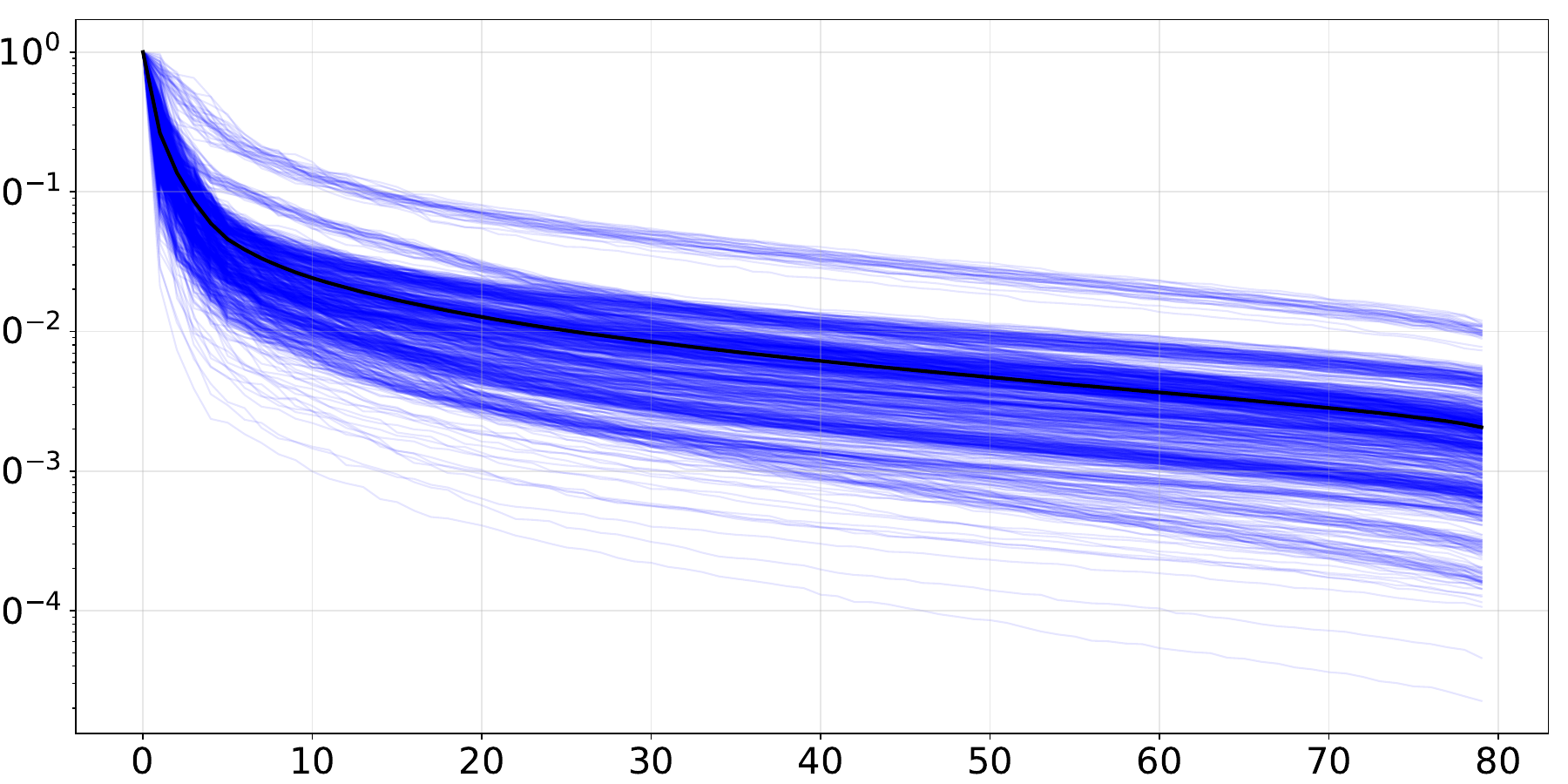} \hfill
    \includegraphics[width=0.32\linewidth]{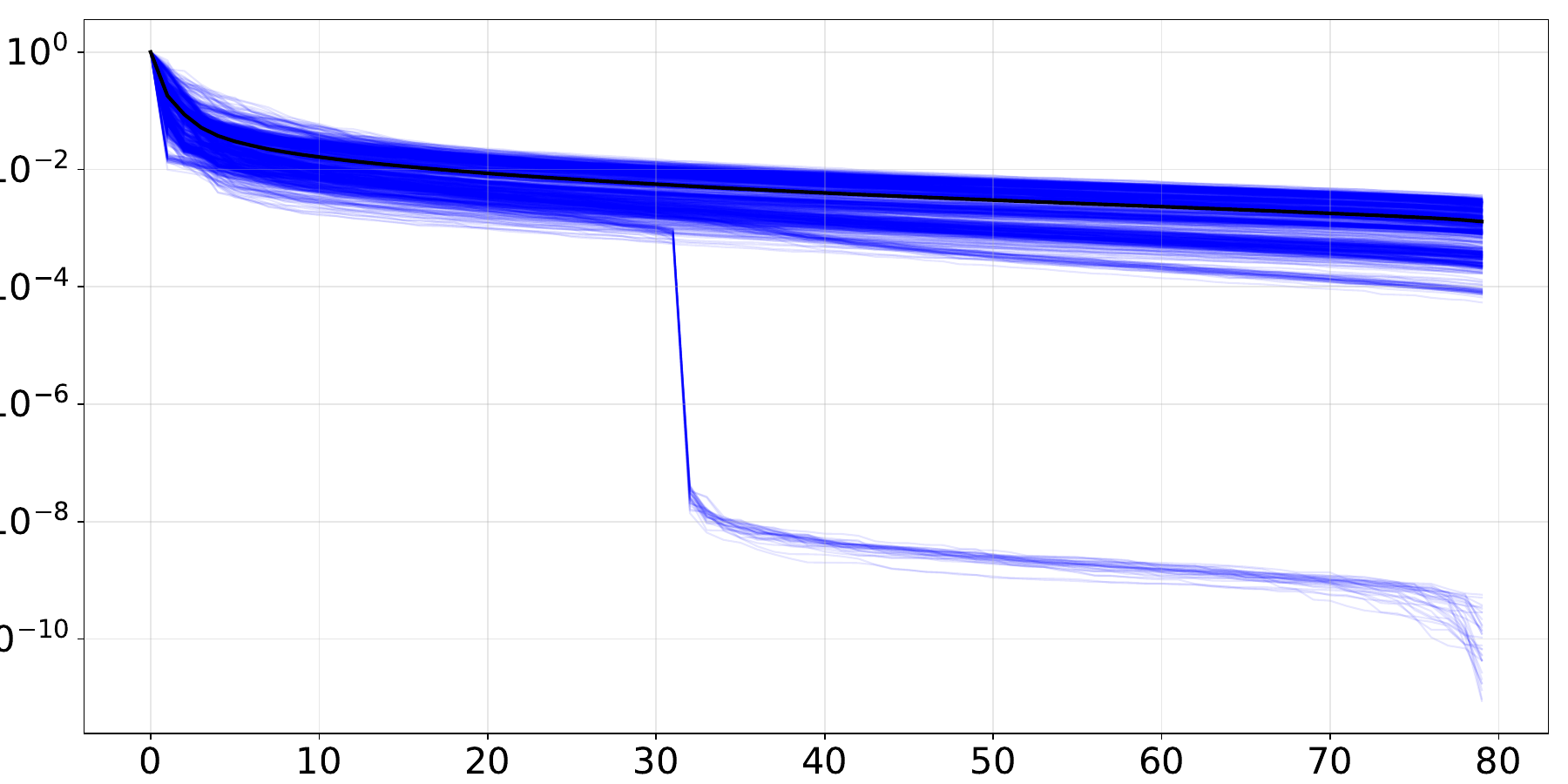} \hfill
    \includegraphics[width=0.32\linewidth]{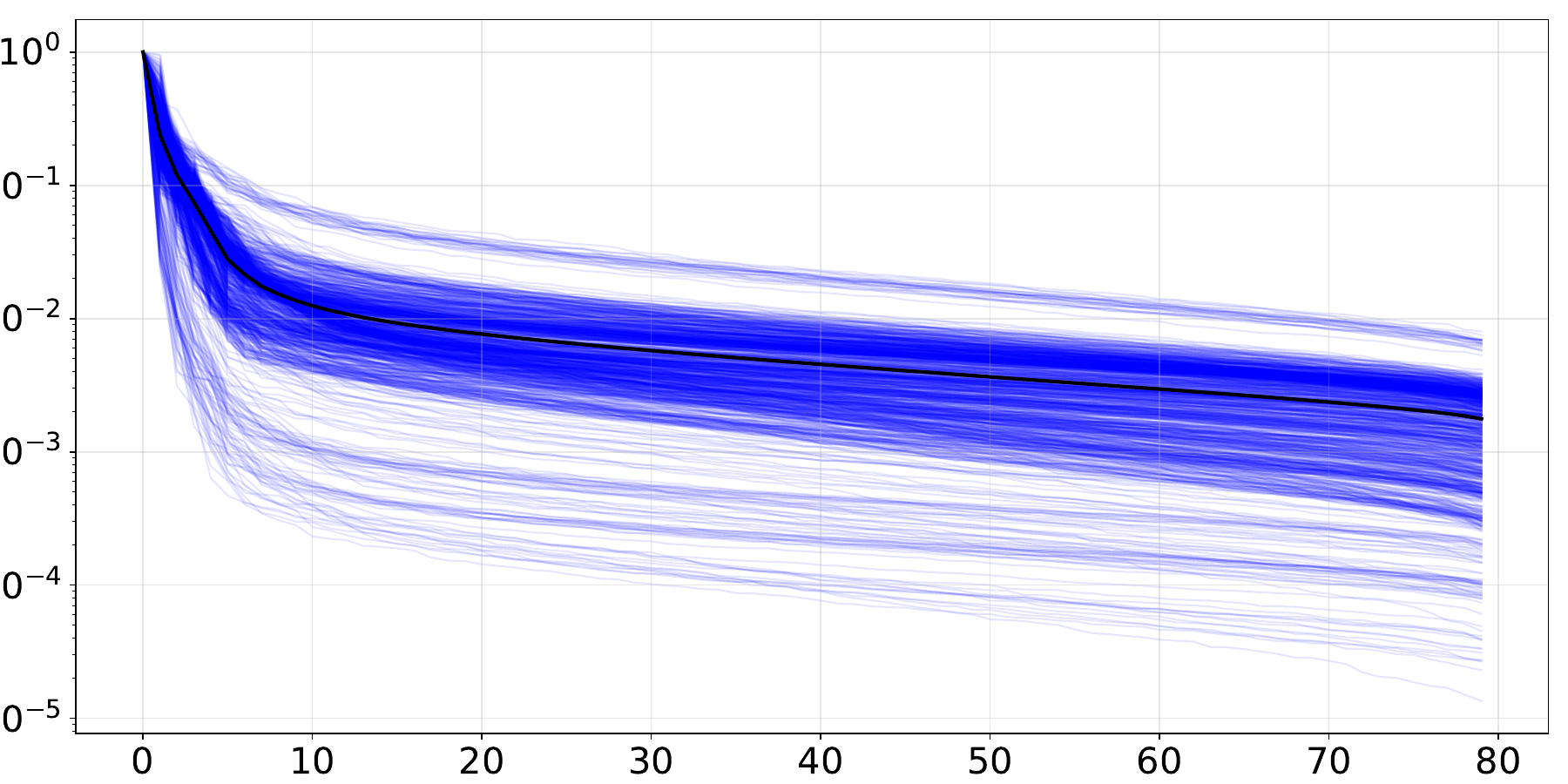}
    \caption{Singular values of gradients with respect to $Q,K$, and $V$ (left, center, and right respectively) matrices across all attention heads of {\tt OPT-2.7B}. We use a mini-batch of size $32$ drawn from the SST-2 data set. Each blue line represents the singular values of a gradient matrix for a particular attention head, normalized by dividing through by the largest singular value. The black line represents the mean. Note that each gradient matrix is of size $2560\times 80$. }
    \label{fig:opt_singular_values}
\end{figure*}

\subsection{Notation and Conventions}
We  consider $m \times n$ matrices. Without loss of generality, we take $m \leq n$. We shall refer to the (reduced) singular value decomposition of any matrix $A \in \mathbb{R}^{m\times n}$ as $A = U_A\Sigma_AV_A^{\top}$ where $U_A \in \mathbb{R}^{m\times m}$ and $V_A\in \mathbb{R}^{n\times m}$ have orthonormal columns and $\Sigma_A \in \mathbb{R}^{{m}\times {m}}$ is diagonal with the singular values $\sigma_1(A),\ldots, \sigma_{m}(A)$ of $A$ along the diagonal.

When we wish to emphasize that the optimization variable is a matrix we write in capital letters like $X \in \R^{m\times n}$. 
Occasionally we write $\vec{x} = \mathrm{vec}(X) \in \mathbb{R}^{N}$ to denote the vector obtained by stacking the columns of $X,$ where $N = mn$. For $A,B \in \mathbb{R}^{m\times n}$ we denote the Frobenius inner product and its induced norm as
\begin{equation}\label{eq:innerprod}
    \langle A,B\rangle_\fro = \mathrm{trace}(A^{\top}B), \quad \quad \|A\|_\fro = \sqrt{\langle A,A\rangle_\fro}. 
\end{equation}
This defines the same geometry over $\R^{m\times n}$ as the Euclidean geometry over $\R^N$. 
We additionally use $\|\vec{x}\|$ to denote the Euclidean norm of a vector, $\|A\|_2 = \sigma_1(A)$ to denote the spectral norm of a matrix, and $\|A\|_{\star} = \sum_{i=1}^{m} \sigma_i(A)$ to denote the nuclear norm of a matrix (which is the dual norm to the spectral norm). 

For any $A \in \R^{m\times n}$ we define $\Projr(A)$ to be the nearest matrix of rank at most $r$ to $A$:
\begin{equation}
    \Projr(A) = \argmin_{B: \rank(B) \leq r}\|A - B\|_\fro
\end{equation}
$\Projr(A)$ may be computed in closed form using the SVD:
\begin{equation}
    \Projr(A) = U_A\tilde{\Sigma}_AV_{A}^{\top}.
    \label{eq:compute_r_proj}
\end{equation}
where $\tilde{\Sigma}_A = \mathrm{diag}(\sigma_1(A),\ldots, \sigma_r(A),0, \ldots, 0)$. At a minimum, we assume the following:

\begin{assumption}
    We assume that $f$ is bounded below. That is, there exists a value $f_{\mathrm{low}}\in \mathbb{R}$ such that 
    \begin{equation}
        f(X) \geq f_{\mathrm{low}} \qquad \forall X \in \mathbb{R}^{m\times n}.
    \end{equation}
    \label{assumption:bounded_below}
\end{assumption}

\begin{assumption}
 We assume $f:\R^{m\times n} \to \R $ is Lipschitz differentiable, or $L$-smooth, 
 \begin{enumerate}[label=(\alph*), ref=\theassumption(\alph*)]
    \item\label{assumption:L_smooth} with respect to the Frobenius inner product:
    $$
        \| \nabla f ( X ) - \nabla f ( Y ) \|_\fro \leq L \| X - Y \|_\fro.
    $$
    \item\label{assumption:L_smooth_nuclear} with respect to the spectral/nuclear norm pairing:
     $$
         \| \nabla f ( X ) - \nabla f ( Y ) \|_\star \leq L_\star \| X - Y \|_2.
     $$
 \end{enumerate}
\end{assumption}

By the equivalence of norms on finite-dimensional vector spaces Assumptions~\ref{assumption:L_smooth} and \ref{assumption:L_smooth_nuclear} are equivalent, albeit with different constants.

\begin{remark}
    To simplify the exposition, we focus on functions of a single matrix variable. However, our techniques may easily be extended to functions of a set of matrix variables: $f(\mathbf{X})$ where $\mathbf{X} = \{X_1,\ldots, X_{\ell}\}$ and $X_{i}\in\R^{m_i\times n_i}$. Such functions are ubiquitous in LLM pre-training and finetuning.
    \label{remark:sets_matrices}
\end{remark}


\section{Related Work}\label{sec:PriorWork}

\paragraph{Random gradient method} The most common method for gradient-free optimization (GFO) was studied by \citet{nesterov2017random} under the name {\em random gradient method}, which iterates as
\begin{equation}
    \vec{x}_{k+1} = \vec{x}_k - \alpha_kD_{\vec{z}}f(\vec{x}_k)\vec{z}.
\end{equation}
In the context of machine learning, this method is also referred to as the {\em forward gradient method} \citep{baydin2022gradients,belouze2022optimization,renscaling}. Variants which sample multiple $\vec{z}_i$ per iteration, i.e. $\vec{x}_{k+1} = \vec{x}_k - \alpha \vec{g}_k$ where
\begin{equation}
    \vec{g}_k:= \frac{1}{d}\sum_{i=1}^d D_{\vec{z}_i}f(\vec{x}_k)\vec{z}_i
    \label{eq:random_gradient_method}
\end{equation}
are a natural, although somewhat under-studied extension \citep{malladi2023fine,flugel2025beyond}. \citet{kozak2021stochastic} propose a gradient-free method which uses orthogonalized directions $\vec{z}_i$ to obtain the projection of $\nabla f(\vec{x}_k)$ to random subspace.

\paragraph{Derivative-free optimization (DFO)} DFO considers the setting where only function evaluations are available. Assuming noise-free function evaluations, directional derivatives may be approximated to arbitrary accuracy as
 \begin{equation}
     D_zf(\vec{x}) \approx \frac{f(\vec{x} + h\vec{z}) - f(\vec{x})}{h}
     \label{eq:connection_to_dfo}
 \end{equation}
by selecting a sufficiently small $h$ (see Lemma~\ref{lemma:directional_derivatives}). 
The derivative-free analogue of \eqref{eq:random_gradient_method}, i.e.
\begin{equation}
     \vec{g}_k:= \frac{1}{d}\sum_{i=1}^d \frac{f(\x_k + h\z_i) - f(\x_k)}{h}\z_i, 
     \label{eq:RGE}
\end{equation}
is known as the Random Gradient Estimator (RGE). DFO methods which use the RGE as a drop-in replacement for the gradient in first-order methods are widely used in practice \cite{liu2020primer, zhang2024revisiting}, and well-studied in theory \citep{nesterov2017random,ghadimi2013stochastic,berahas2022theoretical}. This is certainly not the only approach to derivative-free optimization, and we mention also model-based~\cite{powell2006newuoa} algorithms; line-search based methods~\cite{karmanov1974convergence,golovin2019gradientless,bergou2020stochastic,gorbunov2019stochastic,kim2025curvature}; direct/pattern search methods~\cite{nelder1965simplex,hough2001asynchronous,audet2006mesh}; and evolutionary algorithms~\cite{hansen2001completely,loshchilov2014computationally,salimans2017evolution}.

\paragraph{Matrix optimization} In the DFO setting, LOZO \cite{chenenhancing2025} attempts to exploit low-rank structure in the gradients of matrix functions by replacing the random sampling directions\footnote{Note they use centered---instead of forward---finite differences.} in the RGE \eqref{eq:RGE} with random low-rank matrices:
\begin{equation}
     G_k:= \frac{1}{d}\sum_{i=1}^d \frac{f(X_k + hU_iV_i^{\top}) - f(X_k)}{h}U_iV_i^{\top}.
     \label{eq:LRGE}
\end{equation}
This approach is extended by \citet{fang2026lozo+}. In the first-order setting \citet{martens2015optimizing,gupta2018shampoo,jordan2024muon} and many others have analyzed matrix optimizers such as K-FAC, Shampoo, and Muon. These optimizers have been shown to outperform `generic' vector optimizers such as Adam~\cite{kingma2014adam} for certain tasks; notably LLM pre-training \citep{liu2025muon}. Here, we focus on spectral descent \citep{carlson2015preconditioned,carlson2015stochastic}, in which $\nabla f(X_k)$ is computed and then projected onto the set of semi-orthogonal matrices:
\begin{equation}
    \mathcal{O}_{m,n} = \{O \in \mathbb{R}^{m\times n}: O^{\top}O = I_{n,n}\},
    \label{eq:semi-orthogonal-matrices}
\end{equation}
whence the algorithm iterates as 
\begin{equation}
    X_{k+1} = X_k - \alpha_k \mathrm{Proj}_{\mathcal{O}_{m,n}}(\nabla f(X_k)).
    \label{eq:spectral_descent}
\end{equation}
Analysis of spectral descent \citep{carlson2015preconditioned,bernstein2024old,shulginbeyond} typically proceeds by interpreting this projection as the steepest descent direction with respect to the spectral norm. 

Several recent works have adapted Muon/spectral descent to the DFO/GFO setting by combining a gradient estimator of the form \eqref{eq:RGE}, \eqref{eq:LRGE}, or \eqref{eq:random_gradient_method} with projection to $\mathcal{O}_{m,n}$ \citep{lang2026powering,petrov2025leveraging} However, these works treat gradient estimation and projection as independent sub-routines. We show that by reframing gradient estimation one can achieve both at the same time.

\section{Low-rank gradients}
\label{sec:low_rank_gradients}
We say $A \in \mathbb{R}^{m\times n}$ is $(\delta, r)$-approximately-low-rank if it is well-approximated by a low-rank matrix:
   \begin{equation}
        \|A - \Projr(A)\|_\fro \leq \delta\|A\|_\fro.
        \label{eq:low_rank_frobenius}
    \end{equation}
We say $f: \mathbb{R}^{m\times n} \to \mathbb{R}$ has $(\delta, r)$-approximately-low-rank gradients if, for all $X \in \mathbb{R}^{m\times n}$, the gradient $\nabla f(X)$ is  $(\delta, r)$-approximately-low-rank.

\begin{assumption} $f$ has $(\delta, r)$-approximately-low-rank gradients for some $r\ll m$ and $\delta \ll 1$.
\label{assumption:low-rank}
\end{assumption}

Low-rank structure in the weight matrices of deep neural networks has long been acknowledged, and is the basis for the popular LoRA 
fine-tuning framework \cite{hu2021lora}. Several recent works~\cite{zhao2024galore,chenenhancing2025,lang2026powering} have observed that the gradients associated to these weight matrices are also low-rank. For example, \citet{chenenhancing2025} track the gradients associated to the $Q$ and $V$ matrices of a particular attention head while fine-tuning the {\tt OPT-1.3B} \cite{zhang2022opt} LLM. They show that the singular values of these gradients decay rapidly, indicating approximate low-rank structure.

We verify this with a related but larger model ({\tt OPT-2.7B}) and a different fine-tuning task (SST-2 \cite{socher-etal-2013-recursive}). We train for just a single step of stochastic gradient descent (batch size of 32), but examine the structure of $\frac{\partial f}{\partial Q}, \frac{\partial f}{\partial K}$, and $\frac{\partial f}{\partial V}$ for all 1024 attention heads. The results are striking. Across all 1024 heads, we find that $59\%$ of the $K$ matrices, $83\%$ of the $Q$ matrices, and $58\%$ of the $V$ matrices are $(\delta = 0.05, r = 4)$-approximately low-rank (see Assumption~\ref{assumption:low-rank}). A similar experiment with {\tt gpt-2} \cite{radford2019language} revealed even more extreme low rank structure: of the 144 attention heads, 124 of the $K$ matrices, 123 of the $Q$ matrices, and $111$ of the $V$ matrices are $(\delta = 0.05, r = 2)$-approximately low-rank.

\section{Gradient Recovery}
\label{sec:Recovery_Algorithms}
The random gradient method \eqref{eq:random_gradient_method} uses one approach for determining a step direction from directional derivatives. We frame this---in the matrix function context---as a problem of estimating the full gradient given only directional derivatives. This allows us to introduce novel tools from the signal processing into gradient-free optimization.

Suppose we have selected $d < N$ sampling directions $Z_1,\ldots, Z_d \in \mathbb{R}^{m\times n}$. Consider the induced {\bf sensing operator}
\begin{subequations}
\label{eq:signal_matrix_recovery}
\begin{align}
    & \mathcal{Z}: \mathbb{R}^{m\times n} \to \mathbb{R}^d \\
    & \left[\mathcal{Z}(A)\right]_i = \langle Z_i, A\rangle_\fro.
\end{align}
\end{subequations}
An extensive line of work has studied the generic inverse problem of approximating $A$ given $\mathcal{Z}(A)$. We call this the {\em matrix recovery problem}. Because 
\begin{equation*}
    y_i := D_{Z_i}f(X) = \langle Z_i,\nabla f(X)\rangle
\end{equation*}
we may write $\y := \mathcal{Z}( \nabla f(X) ) \in \mathbb{R}^d$ and define the {\bf gradient recovery problem as}
\begin{equation}
    \textit{Approximate } \nabla f(X) \textit{ given } (\mathcal{Z},\y),
    \label{eq:gradient_estimation_problem}
\end{equation}
which we recognize as a special case of matrix recovery. We review several representative approaches and adapt them to \eqref{eq:gradient_estimation_problem}. We test these approaches empirically in Section~\ref{sec:numerical_experiments} and analyze a subset of them theoretically in Section~\ref{sec:proposed_algorithm}.

\subsection{Rank-agnostic Recovery algorithms}
\label{sec:rank-agnostic_Recovery_Algorithms}

The simplest algorithm for solving \eqref{eq:gradient_estimation_problem} is to apply the adjoint:
\begin{equation*}
    G_k = \mathcal{Z}^{*}(\y) = \sum_{i=1}^d y_i Z_i.
\end{equation*}
With $y_i$ as above, we note that {\em the adjoint sensing operator recovers the gradient estimator used in the random gradient method} \eqref{eq:random_gradient_method}, up to a normalizing factor of $1/d$. We absorb this factor into the sampling distribution, see Section~\ref{sec:sampling_distribution}. We refer to this approach as {\tt adjoint}. 

An improvement on {\tt adjoint} can be made by applying the pseudoinverse operator $\mathcal{Z}^\dagger$ to $\y$ instead of the adjoint. In the case that the \emph{Gram matrix} $A \in \R^{d \times d}$ with entries $A_{ij} = \langle Z_i, Z_j \rangle_\fro$ is invertible, this corresponds to 
\begin{equation}
G_k = \mathcal{Z}^\dagger ( \y ) = \sum_{i=1}^d w_i Z_i, \quad \quad \vec{w} = A^{-1} \y,
\label{eq:pseudo_inverse}
\end{equation}
see \citet{luenberger} for a proof. We call this approach {\tt pseudoinverse}. This is closely related to the simplex gradient in DFO \cite{custodio2007using,regis2015calculus}. 

\subsection{Rank-aware Recovery Algorithms}
When $f$ satisfies Assumption~\ref{assumption:low-rank} we may refine \eqref{eq:gradient_estimation_problem} to incorporate this information. Specifically, we may phrase gradient recovery as a constrained optimization problem:
\begin{equation}
    G_k = \argmin_{G \in \mathbb{R}^{m\times n}} \|\mathcal{Z}(G)  - \y \|_\euc \text{\it subject to } \mathrm{rank}(G) \leq r. \label{eq:lr_gradient_estimation_problem}
\end{equation}

Similar formulations exist for vector gradients, see \cite{choromanski2020provably,wang2018stochastic,cai2022zeroth}. Although \eqref{eq:lr_gradient_estimation_problem} is a non-convex problem, one can still apply first-order methods such as projected gradient descent. The resulting iteration, with step sizes $\eta^t > 0$,
\begin{equation}
    G^{t+1} = \Projr\left(G^t - \eta^t \mathcal{Z}^{\star}(\mathcal{Z}(G^t) - \y ) \right),
    \label{eq:iterative_hard_thresholding}
\end{equation}
is guaranteed to converge linearly under certain conditions on $\mathcal{Z}$ \cite{jain2010guaranteed,davenport2016overview}. We select $\eta^t$ using a rule proposed by \citet{tanner2013normalized}, and, following \citet{davenport2016overview}, we call this algorithm iterative hard thresholding, or {\tt IHT}.

Because the optimization variable $G$ is assumed low-rank, it can be written as $G = UV$ where $U \in \mathbb{R}^{m\times r}$ and $V \in \mathbb{R}^{r \times n},$ leading to a  class of algorithms which avoids the repeated SVD computations required to compute $\Projr$. While many algorithms in this class exist, we focus on two prototypical methods: alternating minimization {\tt AltMin} \cite{jain2012lowrankmatrixcompletionusing, hardt2014understanding} and Burer-Monteiro gradient descent, or {\tt BM-GD}. Both methods reparameterize \eqref{eq:lr_gradient_estimation_problem} to
\begin{equation}\label{eq:reparamterized_objective}
(U_k,V_k) = 
\argmin_{(U, V)} 
\| \cZ (U V^\top) - \y \|.
\end{equation}
{\tt AltMin} freezes one matrix variable at a time and solves a standard least squares problem in the other: 
\begin{align*}
U^{t+1} &= \argmin_{U} \| \cZ ( U V^{t,\top} ) - \y \|, \\
V^{t+1} &= \argmin_{V} \| \cZ ( U^{t+1} V^\top ) - \y \|.
\end{align*}
We perform $T$ iterations of this procedure. Another approach is to minimize the objective \eqref{eq:reparamterized_objective} with gradient descent; a naive implementation suffers from a non-uniqueness problem, as $UV^\top = (U C^{-1})(V C)^\top$ for invertible $C \in \RR^{r \times r}.$ Algorithm 2 of \cite{procrustes}  employs a regularizer of the form $\| U U^\top - V V^\top \|$ to improve scaling of this problem. This is the algorithm we call {\tt BM-GD}, which is gradient descent applied to the objective
$$
(U_k,V_k) = 
\argmin_{(U, V)}  \underbrace{\| \cZ (U V^\top) - \y \|^2 + 
\frac{1}{8} \|
UU^\top - VV^\top
\|^2}_{:= g(U,V)}.
$$
We use a backtracking line search to set the step size $\eta^t$ and perform $T$ steps.


\subsection{On the selection of sampling directions}
\label{sec:sampling_distribution}
Analyzing both rank-agnostic and rank-aware recovery algorithms requires that $\mathcal{Z}$ is suitably well conditioned. For example, in the rank-aware case, we require $\mathcal{Z}$ to satisfy the {\em restricted isometry property}: 
\begin{definition}
\label{def:rip}
    We say $\cZ$ satisfies the $s$ restricted isometry property (RIP) if there exists a constant $\delta \in (0,1)$ if, for all $G \in \mathbb{R}^{m\times n}$ of rank at most $s$ we have that
    \begin{equation}
        (1-\delta)\|G\|_F^2 \leq \|\cZ(G)\|^2_2 \leq (1+\delta)\|G\|_F^2
    \end{equation}
    We call the smallest such $\delta$ the {\em restricted isometry constant} of $\cZ$, denoted $\delta_s(\cZ)$. 
\end{definition}
In signal processing the physics of the problem often constrain the construction of $\mathcal{Z}$, making it hard to guarantee this conditioning. However, in our setting there are no such constraints, and consequently we choose a sampling distribution for $\mathcal{Z}$---for all recovery algorithms---for which it is easy to prove the required conditioning holds. 

\begin{assumption} Each entry of each $Z_i$ is sampled independently from the Gaussian distribution $\mathcal{N}(0, 1/d)$.
\label{assumption:sampling_directions}
\end{assumption}

\section{Proposed Algorithms}
\label{sec:proposed_algorithm}
We propose two approaches for problem~\eqref{eq:minimization_problem}. The first is a simple meta-algorithm: we update the random gradient method by replacing the gradient estimator \eqref{eq:random_gradient_method} with the estimators of Section~\ref{sec:Recovery_Algorithms}, see Algorithm~\ref{alg:RA-GD}. We call this approach {\tt RA-GD}, where {\tt RA} is a placeholder for the recovery algorithm used. Note the subtle difference in how rank-aware and rank-agnostic recovery algorithms are treated. For rank-aware methods $Z_1,\ldots, Z_d$ only needs to be sampled once. This is because the success or failure of these approaches depends only on whether $\cZ$ satisfies the RIP (see Definition~\ref{def:rip}), which holds with high probability for a single draw of $Z_1,\ldots, Z_d$. The second approach drops the {\tt IHT} gradient estimator into the spectral descent algorithm \eqref{eq:spectral_descent} in a way that amortizes the cost of gradient estimation over the orthogonalization step.

In GFO, it is the number of directional derivative queries, not the number of iterations, that ultimately counts. Consequently, we analyze the convergence of all methods with respect to the number of queries required to find an $\varepsilon$-stationary point, that is, a point $X_k$ with
\begin{equation}
    \|\nabla f(X_k)\|_\fro^2 \leq \varepsilon.
    \label{eq:eps_stationarity}
\end{equation}

\begin{algorithm}
\caption{{\tt RA-GD} where {\tt RA} denotes the recovery algorithm used.}
\label{alg:RA-GD}
\begin{algorithmic}[1]
\STATE \textbf{Input:} $X_0$, choice of recovery algorithm {\tt RA}, sampling parameter $d$

\IF{{\tt RA} $\in \{\text{\tt IHT, AltMin, BM-GD} \}$ } 
    \STATE  Sample $Z_1,\ldots, Z_d$ satisfying Assumption~\ref{assumption:sampling_directions}.
\ENDIF
\FOR{ $k=0,\ldots, K-1$}
        \IF{{\tt RA} $\in\{{\tt adjoint, pseudoinverse}\}$}
            \STATE Sample $Z_1,\ldots, Z_d$ satisfying Assumption~\ref{assumption:sampling_directions}.
        \ENDIF
        \STATE Acquire $y_i = D_{Z_i}f(X_k)$ for $i=1,\ldots, d$. 
        \STATE Acquire $G_k = \text{\tt RA}(\vec{y}; Z_1,\ldots, Z_d)\approx \nabla f(X_k)$.
        \STATE Acquire step-size $\alpha_k$
    \STATE Iterate: $X_{k+1} = X_k - \alpha_k G_k$
\ENDFOR
\STATE \textbf{Return:} $X_K$.
\end{algorithmic}
\end{algorithm}

\subsection{The rank agnostic case}
\label{sec:rank_agnostic_convergence}

First, we analyze the $\text{\tt RA} = {\tt pseudoinverse}$ and ${\tt RA} = {\tt adjoint}$ cases. Neither of these algorithms exploit low-rank structure in $\nabla f(X_k)$. Consequently, both algorithms may be analyzed by simply vectorizing the problem. The results in this section either recover or modestly extend results in the existing literature. 

\begin{lemma}\label{lemma:moment_bounds_for_pseudoinverse}
    Suppose Assumptions \ref{assumption:L_smooth} and \ref{assumption:sampling_directions} hold. Then $G_k$ produced by {\tt pseudoinverse} satisfies 
    \begin{equation}
        G_k = \cS^{\dagger}(\y) = \mathrm{Proj}_{\mathrm{Span}(Z_1,\ldots, Z_d)}(\nabla f(X_k)).
    \end{equation}
    Furthermore,
    \begin{align*}
    \EE [ G_k | X_k ] = \frac {\min (d, mn)}{mn} \nabla f ( X_k ) \quad \text{ and } \quad \EE [ \| G_k \|_\fro^2 | X_k ] =  \frac{\min (d, mn)}{mn} \| \nabla f ( X_k ) \|_\fro^2.
    \end{align*}
\end{lemma}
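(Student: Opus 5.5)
We vectorize throughout. Write $N = mn$, $\vec{g} = \mathrm{vec}(\nabla f(X_k))$, and let $\mathbf{Z} \in \R^{N\times d}$ be the matrix whose $i$-th column is $\mathrm{vec}(Z_i)$. Then $\y = \mathbf{Z}^\top \vec{g}$ and the Gram matrix is $A = \mathbf{Z}^\top\mathbf{Z}$. Conditioning on $X_k$ fixes $\vec{g}$. By Assumption~\ref{assumption:sampling_directions}, $\mathbf{Z}$ is independent of $X_k$: for {\tt pseudoinverse} the directions are resampled at each iteration.

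\textbf{Projection identity.} First, when $d \le N$ the Gaussian matrix $\mathbf{Z}$ has full column rank almost surely, so $A$ is invertible. Then \eqref{eq:pseudo_inverse} gives
\[
\mathrm{vec}(G_k) = \mathbf{Z}(\mathbf{Z}^\top\mathbf{Z})^{-1}\mathbf{Z}^\top \vec{g} = P\vec{g},
\]
where $P$ is the orthogonal projector onto $\mathrm{range}(\mathbf{Z}) = \mathrm{Span}(\mathrm{vec}(Z_1),\ldots,\mathrm{vec}(Z_d))$. This proves the first claim, since the Frobenius geometry coincides with the Euclidean one. When $d > N$ the $Z_i$ span all of $\R^{m\times n}$ almost surely. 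The pseudoinverse, in its general least-norm form, then returns $\vec{g}$ exactly, which is the projection onto the whole space. This case explains the $\min(d,mn)$ in the statement.

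\textbf{Moments.} Since $P$ is symmetric and idempotent, $\|G_k\|_\fro^2 = \vec{g}^\top P^\top P\vec{g} = \vec{g}^\top P\vec{g}$. Both moments therefore reduce to computing $\EE[P]$:
\[
\EE[G_k\mid X_k] = \EE[P]\,\vec{g}, \qquad \EE[\|G_k\|_\fro^2\mid X_k] = \vec{g}^\top \EE[P]\,\vec{g}.
\]
It therefore suffices to show $\EE[P] = \frac{\min(d,N)}{N} I_N$. The distribution of $\mathbf{Z}$ is invariant under $\mathbf{Z}\mapsto Q\mathbf{Z}$ for any orthogonal $Q \in \R^{N\times N}$, because it has i.i.d. Gaussian entries; the variance $1/d$ is irrelevant, since $P$ is scale invariant. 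The projector onto $\mathrm{range}(Q\mathbf{Z})$ is $QPQ^\top$, so $Q\,\EE[P]\,Q^\top = \EE[P]$ for all orthogonal $Q$. A matrix commuting with every orthogonal matrix is a scalar multiple of the identity, so $\EE[P] = cI_N$. Taking traces gives $cN = \EE[\trace P] = \EE[\rank P] = \min(d,N)$ almost surely, hence $c = \min(d,N)/N$. Substituting into the two displays yields both formulas.

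\textbf{Main obstacle.} The delicate points are the almost-sure full-rank fact and the $d>N$ convention for $\cS^\dagger$. For full rank: a Gaussian matrix is rank-deficient only on a measure-zero algebraic set (the zero set of a nonzero polynomial, namely a maximal minor). For $d > N$, \eqref{eq:pseudo_inverse} must be read as the Moore–Penrose pseudoinverse rather than via $A^{-1}$, since $A$ is then singular. The rotation-invariance argument is the key idea, and it is clean once these points are settled.
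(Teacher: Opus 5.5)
Your proposal is correct and follows essentially the same route as the paper. Rotational invariance of the Gaussian ensemble makes $\EE[P]$ a multiple of the identity, $\trace P = \rank P = \min(d,mn)$ almost surely fixes the constant, and idempotence of $P$ gives the second moment. The differences are notational (you take the range of an $mn\times d$ matrix where the paper uses $\mathbf{Z}^\dagger\mathbf{Z}$ with $\mathbf{Z}\in\R^{d\times mn}$), plus two points you make explicit that the paper leaves implicit: that the freshly sampled directions are independent of $X_k$, and that the $d>mn$ case needs the Moore--Penrose pseudoinverse.
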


Next we consider ${\tt RA} = {\tt adjoint}$. 
\begin{lemma}
    Suppose Assumptions~\ref{assumption:L_smooth} and \ref{assumption:sampling_directions} hold. Then $G_k$ produced by {\tt adjoint} satisfies
    \begin{align}
        & \mathbb{E}[G_k | X_k] = \nabla f(X_k) \quad \text{ and } \quad  \mathbb{E}\left[\left\|G_k\right\|_\fro^2 | X_k\right] = \left( \frac{d + mn+1}{d}\right)\|\nabla f(X_k)\|_\fro^2.
    \end{align}
    \label{lemma:moment_bounds_for_adjoint}
\end{lemma}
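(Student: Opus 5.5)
Write $g := \mathrm{vec}(\nabla f(X_k)) \in \R^N$ with $N = mn$, and $z_i := \mathrm{vec}(Z_i)$. Conditioned on $X_k$, $g$ is deterministic. The $z_i$ are independent draws from $\mathcal{N}(0, \tfrac{1}{d} I_N)$ (Assumption~\ref{assumption:sampling_directions}). Since $y_i = D_{Z_i}f(X_k) = \langle z_i, g\rangle$, the adjoint estimator is
\begin{equation*}
\mathrm{vec}(G_k) = \sum_{i=1}^d z_i z_i^\top g = M g, \qquad M := \sum_{i=1}^d z_i z_i^\top .
\end{equation*}
Both claims therefore reduce to moment computations for a sum of $d$ i.i.d.\ rank-one Gaussian outer products. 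Assumption~\ref{assumption:L_smooth} plays no role beyond guaranteeing that $\nabla f(X_k)$ exists.

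\textbf{First moment.} By linearity, $\EE[M] = d\,\EE[z_1 z_1^\top] = d \cdot \tfrac{1}{d} I_N = I_N$. Hence $\EE[G_k \mid X_k] = \nabla f(X_k)$.

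\textbf{Second moment.} We have $\|G_k\|_\fro^2 = g^\top M^2 g$. Expanding,
\begin{equation*}
M^2 = \sum_i z_i z_i^\top z_i z_i^\top + \sum_{i\neq j} z_i z_i^\top z_j z_j^\top .
\end{equation*}
The cross terms use independence: there are $d(d-1)$ of them, each with expectation $\tfrac{1}{d^2} I_N$. Their total contribution is $\tfrac{d-1}{d} I_N$.

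The diagonal terms need a fourth-moment identity. Write $z = w/\sqrt{d}$ with $w \sim \mathcal{N}(0, I_N)$. Then
\begin{equation*}
\EE[(w^\top g)^2 \|w\|^2] = \sum_j \EE[(w^\top g)^2 w_j^2] = (N+2)\|g\|^2 .
\end{equation*}
This follows from Isserlis' theorem, or simply from rotational invariance: take $g = \|g\|e_1$ and use $\EE w_1^4 = 3$ and $\EE w_1^2 w_j^2 = 1$ for $j \neq 1$. Consequently
\begin{equation*}
\EE[g^\top z z^\top z z^\top g] = \frac{N+2}{d^2}\|g\|^2,
\end{equation*}
and summing over the $d$ diagonal terms gives $\tfrac{N+2}{d}\|g\|^2$.

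\textbf{Combining.} Adding the two contributions,
\begin{equation*}
\EE\|G_k\|_\fro^2 = \frac{N+2+d-1}{d}\|g\|^2 = \frac{d+mn+1}{d}\,\|\nabla f(X_k)\|_\fro^2 ,
\end{equation*}
which is the stated bound.

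The only nontrivial step is the Gaussian fourth-moment computation $\EE[w w^\top w w^\top] = (N+2) I_N$. Everything else is bookkeeping with the $1/d$ variance scaling, and the main risk is an off-by-one error in counting the diagonal versus cross terms.
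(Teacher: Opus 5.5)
Your proof is correct and follows essentially the same route as the paper: vectorize, use $\mathbb{E}[\mathbf{Z}^\top\mathbf{Z}] = I$ for unbiasedness, and compute $\mathbb{E}[(\mathbf{Z}^\top\mathbf{Z})^2] = \bigl(1 + \frac{mn+1}{d}\bigr) I$ from Gaussian fourth moments. The only difference is bookkeeping: you split $M^2$ by sample index into $i \neq j$ cross terms (handled by independence, contributing $\frac{d-1}{d}$) and $i = j$ terms (handled by rotational invariance, contributing $\frac{mn+2}{d}$), whereas the paper expands $[\mathbf{Z}^\top\mathbf{Z}\mathbf{Z}^\top\mathbf{Z}]_{ij}$ entrywise and applies Isserlis' theorem to obtain the three pieces $1$, $\frac{1}{d}$, and $\frac{mn}{d}$.
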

\begin{remark}
    Lemma~\ref{lemma:moment_bounds_for_adjoint} with $d=1$ is directly comparable to \citep[Theorem 3]{nesterov2017random} where it is shown (converting to our notation) that
    \begin{equation}
        \mathbb{E}\left[\left\|G_k\right\|_\fro^2\right] \leq (mn+4)\|\nabla f(X_k)\|_\fro^2.
    \end{equation}
    We improve upon this bound by reducing $4$ to $2$. Similar results can be found in \citet[Property 5]{belouze2022optimization}, \citet[Lemma 2]{malladi2023fine} (for the case where the $Z_i$ are sampled uniformly from a sphere), and \citet[Lemma 2.4]{berahas2022theoretical} (for the true zeroth-order case).
\end{remark}

Combining Lemmas~\ref{lemma:moment_bounds_for_pseudoinverse} and \ref{lemma:moment_bounds_for_adjoint} with a standard convergence-in-expectation lemma we obtain the convergence results shown in the first two rows of Table~\ref{tab:rank_agnostic_convergence}; see Theorem~\ref{thm:rankagnostic} in Appendix~\ref{appendix:rank-agnostic} for the proof. These results do not require Assumption~\ref{assumption:low-rank}. The takeaway is that while both have the same order of dependence on $K$, the dimensional dependence for {\tt pseudoinverse} ($\frac{mn}{d}$) is slightly better than that for {\tt adjoint} ($\frac{d +mn +1}{d}$).

\citet{kozak2021stochastic} analyze an algorithm very similar to {\tt RA-GD} with {\tt RA}={\tt pseudoinverse} and derive the same convergence rate \cite[Theorem 3]{kozak2021stochastic}. The difference between their algorithm and ours is that instead of using the pseudoinverse to obtain the projection of $\nabla f(X_k)$ on to a random subspace, they use an adjoint gradient estimator but with orthogonal sampling directions. More specifically, and in our notation, they sample $Z_1,\ldots, Z_d$ obeying Assumption~\ref{assumption:sampling_directions} and then use the QR factorization to obtain orthonormal $Q_1,\ldots, Q_d$ such that 
 \begin{equation*}
     \mathrm{Span}(Q_1,\ldots, Q_d) = \mathrm{Span}(Z_1,\ldots, Z_d).
 \end{equation*}
The convergence rate for {\tt RA-GD} with {\tt RA}={\tt adjoint} is analogous to those for gradient descent schemes using the RGE \eqref{eq:RGE}, see \citep{nesterov2017random,berahas2022theoretical}.

\begin{table*}[t]
    \centering
    \renewcommand{\arraystretch}{2} 
    \begin{tabular}{|c|c|c|c|}
    \hline
    \vspace{-3pt}
    {\tt RA:}& Type of guarantee & 
    Step Size & Number of Queries \\ \hline 
    {\tt adjoint}   & in expectation & $\displaystyle \alpha = \frac{d}{(d + mn +1) L}$ & $\displaystyle \frac{2 L (d + mn + 1)\Delta_f}{\varepsilon}$ \\ 
    {\tt pseudoinverse} & in expectation  & $\displaystyle \alpha = \frac{1}{L}$ & $\displaystyle \frac{2L mn \Delta_f }{\varepsilon}$ \\
     {\tt IHT} & with probab. $\displaystyle 1-e^{-c_1d}$ & $\displaystyle \alpha = \frac{1}{L}$  & $\displaystyle \frac{2c_0r(m+n)L\Delta_f}{\varepsilon\left(1 - \delta^2C_{\text{\tt IHT}}(m,r)\right)}$   \\ 
    \hline
    \end{tabular}
    \caption{Number of directional derivative evaluations (``queries'') required to find an $\varepsilon$-stationary point, assuming $d < mn$. Here $\Delta_f = f(X_0) - f_{\mathrm{low}}$.}
    \label{tab:rank_agnostic_convergence}
\end{table*} 

\subsection{The rank aware case}
\label{sec:rank_aware_gradient_recovery}
Next we consider the rank-aware case. We focus on {\tt RA-GD} with {\tt RA}={\tt IHT}.  The analysis of the other rank-aware gradient recovery methods discussed in Section~\ref{sec:Recovery_Algorithms} follows a similar path, and we do not pursue this here.

\begin{theorem}
\label{thm:IHT_gradient_recovery}
    Suppose Assumptions~\ref{assumption:L_smooth}, \ref{assumption:low-rank}, and \ref{assumption:sampling_directions} hold and let $d  = 2c_0r(m+n)$. Then $G_k$ produced by {\tt IHT} (given sufficiently many internal iterates) satisfies:
    \begin{equation*}
        \|G_k  - \nabla f(X_k)\|_\fro^2 \leq \left(8C\frac{m}{r} + 5\right)\delta^2\|\nabla f(X_k)\|_\fro^2 
    \end{equation*}
    with probability at least $1 - e^{-c_1d}$, where $c_0,c_1$, and $C$ are universal constants (i.e.\ they do not depend on $f$). 
\end{theorem}
The proof is in Appendix~\ref{AP:rankAwareConvergence} along with an elaboration on the condition of sufficiently many iterates. We note that in practice we use the step-size rule of \citet{tanner2013normalized}, while in theory we consider a step-size determined by the RIP constant of $\cS$. For notational convenience we define
\begin{equation*}
    C_{\text{\tt IHT}}(m,r) = 8C\frac{m}{r} + 5.
\end{equation*}

Combining Theorem~\ref{thm:IHT_gradient_recovery} with Lemma \ref{HighProbabilityConvergenceLemma} we obtain

\begin{theorem}
\label{thm:IHT_convergence}
 Suppose Assumptions~\ref{assumption:L_smooth}, \ref{assumption:low-rank}, and \ref{assumption:sampling_directions} hold and let $d  = c_0r(m+n)$. Suppose  $C_{\text{\tt IHT}}(m,r)< \delta^{-2}$. Then, after running $K$ iterations of Algorithm \ref{alg:RA-GD}, using IHT with step size $\alpha = 1/L$, we find

 \begin{equation*}
     \min_{0\leq k \leq K-1} \|\nabla  f(X_k)\|_\fro^2 \leq \frac{2L(f(X_0) - f_\text{low})}{(1-\delta^2C_{\text{\tt IHT}}(m,r)) K} 
 \end{equation*}
 with probability at least $1-e^{-c_1 d}$, where $c_0,c_1,$ and $C$ are the constants specified in Theorem \ref{thm:IHT_gradient_recovery}.
\end{theorem}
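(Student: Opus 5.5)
The plan is the standard descent-lemma argument with an inexact gradient, together with a single high-probability event controlling the gradient error at every iterate.

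\textbf{Step 1: the high-probability event.} For the rank-aware methods, Algorithm~\ref{alg:RA-GD} samples $Z_1,\ldots,Z_d$ only once. Theorem~\ref{thm:IHT_gradient_recovery} rests on a single event, namely that $\cZ$ satisfies the RIP of the required order. This event has probability at least $1-e^{-c_1 d}$ and does not depend on $X_k$. On this event, the recovery bound holds simultaneously for every iterate $X_0,\ldots,X_{K-1}$, even though the iterates depend on $\cZ$. I would state this explicitly, since it is why no union bound over $k$ is needed. Everything below is deterministic on this event. Write $E_k = G_k - \nabla f(X_k)$, so that
\[
\|E_k\|_\fro^2 \leq \delta^2 C_{\text{\tt IHT}}(m,r)\,\|\nabla f(X_k)\|_\fro^2 .
\]

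\textbf{Step 2: per-step descent.} By Assumption~\ref{assumption:L_smooth} with $\alpha = 1/L$,
\[
f(X_{k+1}) \leq f(X_k) - \tfrac1L\langle \nabla f(X_k), G_k\rangle + \tfrac{1}{2L}\|G_k\|_\fro^2 .
\]
Completing the square turns the right-hand side into
\[
f(X_k) - \tfrac{1}{2L}\|\nabla f(X_k)\|_\fro^2 + \tfrac{1}{2L}\|G_k-\nabla f(X_k)\|_\fro^2 .
\]
This identity is the key convenience of the step size $1/L$. Substituting the error bound gives
\[
f(X_{k+1}) \leq f(X_k) - \frac{1-\delta^2 C_{\text{\tt IHT}}(m,r)}{2L}\,\|\nabla f(X_k)\|_\fro^2 .
\]
The hypothesis $C_{\text{\tt IHT}}(m,r)<\delta^{-2}$ makes the coefficient positive. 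I expect this to be exactly the content of Lemma~\ref{HighProbabilityConvergenceLemma}, which I would simply invoke in the form: if $\|G_k-\nabla f(X_k)\|^2\leq \theta\|\nabla f(X_k)\|^2$ for all $k$ with $\theta<1$, then the stated rate holds.

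\textbf{Step 3: telescoping.} Summing over $k=0,\ldots,K-1$ and using Assumption~\ref{assumption:bounded_below}, namely $f(X_K)\geq f_{\mathrm{low}}$, gives
\[
\sum_k \|\nabla f(X_k)\|_\fro^2 \leq \frac{2L(f(X_0)-f_{\mathrm{low}})}{1-\delta^2C_{\text{\tt IHT}}(m,r)} .
\]
The minimum is at most the average, which yields the claim.

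\textbf{Expected obstacle.} The only delicate point is Step 1: justifying that Theorem~\ref{thm:IHT_gradient_recovery} applies uniformly to data-dependent iterates with one draw of $\cZ$. Reusing the same $\cZ$ across iterates creates dependence between $\cZ$ and the $X_k$. I would handle this by noting that the IHT recovery guarantee depends only on the RIP constant of $\cZ$ and on the approximate-low-rank property of the target, which holds at every $X$ by Assumption~\ref{assumption:low-rank}.

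I would also reconcile the choice $d=c_0 r(m+n)$ with the $2c_0$ in Theorem~\ref{thm:IHT_gradient_recovery}. The cleanest fix is to absorb the factor $2$ into the universal constant $c_0$.
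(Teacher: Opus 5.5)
Your proposal is correct and follows the paper's proof. The paper invokes Theorem~\ref{thm:IHT_gradient_recovery} on the single event that the once-sampled $\cZ$ has the RIP, so the error bound holds at every iterate without a union bound (Remark~\ref{rmk:no-union-bound-needed}), and then applies Lemma~\ref{HighProbabilityConvergenceLemma} with $c_2 = \delta^2 C_{\text{\tt IHT}}(m,r)$; that lemma's proof is exactly your completing-the-square descent step, followed by telescoping and min-versus-average. Your note about absorbing the factor $2$ from $d = 2c_0 r(m+n)$ into $c_0$ fixes a small inconsistency that the paper leaves implicit.
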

Rearranging Theorem~\ref{thm:IHT_convergence} yields the query complexity stated in line 3 of Table~\ref{tab:rank_agnostic_convergence}.

\subsection{Spectral Descent}

Each iteration of {\tt IHT} computes an SVD, in order to compute $\Projr$. Consequently, we may easily modify {\tt IHT} to return the SVD of $G_k$ at no extra cost. So, the projection of $G_k$ on to the set of semi-orthogonal matrices (see \eqref{eq:semi-orthogonal-matrices}) is computable at the cost of one $m\times r$ matrix-matrix multiply:
\begin{equation}
    \mathrm{Proj}_{\mathcal{O}_{m,n}}(G_k) = U_{G_k}V_{G_k}^{\top}.
\end{equation}
Thus instead of vanilla gradient descent one may wrap the {\tt IHT} gradient into {\em Spectral Descent}, see \eqref{eq:spectral_descent}.

\begin{algorithm}
\caption{{\tt IHT-SpecGD}.}
\label{alg:SpecGD}
\begin{algorithmic}[1]
\STATE \textbf{Input:} $X_0$, number of samples per iteration $d$
\STATE Sample $Z_1,\ldots, Z_d$ satisfying Assumption~\ref{assumption:sampling_directions}.
\FOR{ $k=0,\ldots, K-1$}
    \STATE Acquire $y_i = D_{Z_i}f(X_k)$ for $i=1,\ldots, d$.
    \STATE Acquire {\bf the SVD of the gradient estimate}: 
    \begin{equation*}
        U_k,\tilde{\Sigma}_k, V_k \text{ where } U_k\tilde{\Sigma}_kV_k^{\top} = G_k = {\tt IHT}(\vec{y}; Z_1,\ldots, Z_d).
    \end{equation*}
    \STATE Acquire step-size $\alpha_k$.
    \STATE Iterate: $X_{k+1} = X_k - \alpha_k U_kV_k^{\top}$
\ENDFOR
\STATE \textbf{Return:} $X_K$.
\end{algorithmic}
\end{algorithm}

We analyze the convergence of {\tt IHT-SpecGD} under a different geometry to that considered in the preceding sections. Specifically, we assume smoothness with respect to the {\em nuclear norm}, as captured by Assumption~\ref{assumption:L_smooth_nuclear}.

\begin{theorem}
    \label{thm:spec_gd_convergence}
    Suppose Assumptions~\ref{assumption:bounded_below}, \ref{assumption:L_smooth_nuclear}, \ref{assumption:low-rank}, and \ref{assumption:sampling_directions}  hold. Set $\alpha_k = 1/\sqrt{K}$ for all $k \geq 0$, and $K\geq 3$. Then, with probability at least $1 - e^{-c_1d}$, the iterations of Algorithm~\ref{alg:SpecGD} satisfy 
    \begin{equation}
        \min_{0\leq k \leq K-1}\|\nabla f(X_k)\|_\star \leq \frac{1}{1-2\sqrt{mC_{\text{\tt IHT}}(m,r)}\delta}\left(\frac{f(X_0) - f_{\mathrm{low}}}{\sqrt{K}} + \frac{L_{\star}}{2K^{3/2}}\right).
    \end{equation}
\end{theorem}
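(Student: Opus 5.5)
The argument follows the standard descent lemma for steepest descent in the spectral norm, with the gradient error controlled by Theorem~\ref{thm:IHT_gradient_recovery}. Since the $Z_i$ are drawn once in Algorithm~\ref{alg:SpecGD}, we work on the single event (of probability at least $1-e^{-c_1d}$) on which $\cZ$ has the RIP needed by Theorem~\ref{thm:IHT_gradient_recovery}. On that event, for every $k$,
\[
\|G_k-\nabla f(X_k)\|_\fro\le \sqrt{C_{\text{\tt IHT}}(m,r)}\,\delta\,\|\nabla f(X_k)\|_\fro .
\]
We convert this to the nuclear norm using $\|A\|_\star\le\sqrt{m}\|A\|_\fro$ and $\|A\|_\fro\le\|A\|_\star$:
\[
\|G_k-\nabla f(X_k)\|_\star\le \sqrt{mC_{\text{\tt IHT}}(m,r)}\,\delta\,\|\nabla f(X_k)\|_\star=:\gamma\,\|\nabla f(X_k)\|_\star .
\]

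Next comes the one-step descent. Assumption~\ref{assumption:L_smooth_nuclear} gives the quadratic upper bound $f(Y)\le f(X)+\langle\nabla f(X),Y-X\rangle+\frac{L_\star}{2}\|Y-X\|_2^2$. Apply it with $Y-X=-\alpha_k O_k$, where $O_k=U_kV_k^\top$ satisfies $\|O_k\|_2\le1$ and $\langle G_k,O_k\rangle=\|G_k\|_\star$. Then
\[
\langle\nabla f(X_k),O_k\rangle=\|G_k\|_\star+\langle \nabla f(X_k)-G_k,O_k\rangle\ge \|G_k\|_\star-\|\nabla f(X_k)-G_k\|_\star ,
\]
using the duality of the spectral and nuclear norms. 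The reverse triangle inequality gives $\|G_k\|_\star\ge\|\nabla f(X_k)\|_\star-\|G_k-\nabla f(X_k)\|_\star$. Together,
\[
\langle\nabla f(X_k),O_k\rangle\ge(1-2\gamma)\|\nabla f(X_k)\|_\star ,
\]
which is where the factor $1-2\sqrt{mC_{\text{\tt IHT}}}\,\delta$ comes from. Hence
\[
f(X_{k+1})\le f(X_k)-\alpha_k(1-2\gamma)\|\nabla f(X_k)\|_\star+\tfrac{L_\star}{2}\alpha_k^2 .
\]

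Summing over $k=0,\dots,K-1$, telescoping, and using Assumption~\ref{assumption:bounded_below} gives
\[
(1-2\gamma)\sum_k\alpha_k\|\nabla f(X_k)\|_\star\le f(X_0)-f_{\mathrm{low}}+\tfrac{L_\star}{2}\sum_k\alpha_k^2 .
\]
With $\alpha_k=1/\sqrt K$ we have $\sum_k\alpha_k=\sqrt K$ and $\sum_k\alpha_k^2=1$. Lower-bounding the left side by $\sqrt K$ times the minimum and dividing by $(1-2\gamma)\sqrt K$ (positive, implicitly assuming $\gamma<1/2$) yields
\[
\min_{0\le k\le K-1}\|\nabla f(X_k)\|_\star\le \frac{1}{1-2\gamma}\left(\frac{f(X_0)-f_{\mathrm{low}}}{\sqrt K}+\frac{L_\star}{2\sqrt K}\right).
\]

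The main obstacle is matching the stated second term $L_\star/(2K^{3/2})$: the straightforward computation above gives $L_\star/(2\sqrt K)$, i.e.\ $\frac{L_\star}{2}\alpha_k^2$ summed and then divided by $\sqrt K$. I would re-check the normalization rather than try to force a better exponent, since I do not see a mechanism that produces $K^{-3/2}$ (the hypothesis $K\ge3$ is also unexplained by this argument). The other delicate point is that $\Projr$ may return fewer than $r$ nonzero singular values, so $O_k$ must be read as $U_kV_k^\top$ restricted to the nonzero part; the identity $\langle G_k,O_k\rangle=\|G_k\|_\star$ still holds in that case. Finally, the probability statement is uniform over all $k$ because the single RIP event covers every iteration, so no union bound is needed.
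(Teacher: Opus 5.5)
Your proof follows the paper's. The paper quotes the one-step bound $f(X_{k+1})\le f(X_k)-\alpha_k\|\nabla f(X_k)\|_\star+2\alpha_k\|G_k-\nabla f(X_k)\|_\star+\tfrac{L_\star}{2}\alpha_k^2$ from \citet[Lemma~4]{he2025low}, which you derive directly from spectral/nuclear duality. Like you, it then converts the IHT error bound using $\|\cdot\|_\star\le\sqrt m\|\cdot\|_\fro$ and $\|\cdot\|_\fro\le\|\cdot\|_\star$, and telescopes on the single RIP event.

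Your doubt about the last term is justified. The paper's telescoped inequality has $f(X_0)-f_{\mathrm{low}}+\frac{L_\star}{2K}$ on the right, but $K$ copies of $\frac{L_\star}{2K}$ sum to $\frac{L_\star}{2}$. The correct second term is therefore $\frac{L_\star}{2\sqrt K}$, exactly as you found, and the hypothesis $K\ge 3$ is never used.

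The $K^{-3/2}$ version fails even with exact gradients. Take $f(X)=\frac L2(u^\top Xv)^2$ with unit vectors $u,v$, so $L_\star=L$ and all gradients are rank one, and start at $X_0=\frac{1}{2\sqrt K}uv^\top$. The iterates alternate between $\pm X_0$, so $\|\nabla f(X_k)\|_\star=\frac{L}{2\sqrt K}$ for every $k$. The claimed right-hand side is only $\frac{5L}{8K^{3/2}}$, which is smaller for every $K\ge 2$.
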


\begin{remark}
Ignoring the term proportional to $K^{-3/2}$, we obtain a similar query complexity to those displayed in Table~\ref{tab:rank_agnostic_convergence}. That is, {\tt IHT-SpecGD} finds an $\varepsilon$ stationary point, in the sense of \eqref{eq:eps_stationarity} but with the Frobenius norm replaced with the nuclear norm, after making on the order of 
\begin{equation*}
    \frac{c_0r(m+n)\Delta_f^2}{\varepsilon(1-2\sqrt{mC_{\text{\tt IHT}}(m,r)}\delta)^2}
\end{equation*}
queries. Note the dependence on $\Delta_f^2$, not $\Delta_f$.
\end{remark}

\section{Experiments} \label{sec:numerical_experiments}
To understand which approaches are worth pursuing further, we study their performance on two synthetic test functions. We define the $r$-singular-value-squared function as
\begin{equation}
  f_{r\sigma}: \mathbb{R}^{m\times m} \to \mathbb{R} \qquad   f_{r\sigma}(X) := \sum_{i=1}^r\sigma_i(X)^2
\end{equation}
which reduces to the strongly-convex and smooth function $\|X\|_\fro^2$ if $r=m$. 
Note $f_{r\sigma}$ has gradients of rank at most $r$ everywhere. The Ky Fan regression function is
\begin{equation}
   f_{mr}: \mathbb{R}^{m\times m} \to \mathbb{R} \qquad  f_{mr}(X) = \frac{1}{2}\|X - X^{\star}\|_\mathrm{KF}^2
\end{equation}
where $\|\cdot\|_{\mathrm{KF}}$ is the $r$ Ky Fan norm:
\begin{equation*}
    \|A\|_{\mathrm{KF}} = \sigma_1(A) + \ldots + \sigma_r(A)
\end{equation*}
 and $X^{\star}$ is a fixed matrix\footnote{We generate $X^{\star}$ randomly, but fix the seed across all replicates in our experiment so that $X^{\star}$ is always the same.}. This function also has gradients of rank at most $r$ everywhere, but in the case $r = m$ reduces to the convex but non-smooth function $\|\cdot\|_{\star}$.

 We consider the following suite of optimizers: the $d$-sample random gradient method (\eqref{eq:random_gradient_method}, equivalently: {\tt RA-GD} with {\tt RA}={\tt adjoint}); LOZO \cite{chenenhancing2025}; {\tt RA-GD} with {\tt RA} = {\tt AltMin}, {\tt BM}, {\tt IHT} and {\tt pseudoinverse}; and Algorithm~\ref{alg:SpecGD}. For each (optimizer, objective function) pair we run ten independent replicates given a query budget of $100,000$. In each replicate the only randomness is in the choice of sampling directions. That is, $X_0$ (and $X^{\star}$ in the case of $f_{\rm mr}$) is fixed. For both $f_{r\sigma}$ and $f_{\rm mr}$ we set $m=30$ and $r=3$.
 
 We want to compare optimizers given the best possible step-sizes $\{\alpha_k\}_{k=0}^{K-1}$ and per-iteration sample budget $d$. We do so by (i) tuning $d$, the number of samples per iteration, for every method and every objective function; and (ii) selecting $\alpha_k$, the step-size in the $k$-th iterate, by line search. We {\em do not} count the function evaluations made during this line search towards the query budget. We set $\eta^t$, the inner step-size of {\tt BM-GD}, by a second line search. Additional implementation details can be found in Appendix~\ref{app:Implementation_Details}.

\begin{figure}[htb]
\centering
\includegraphics[width=0.48\textwidth]{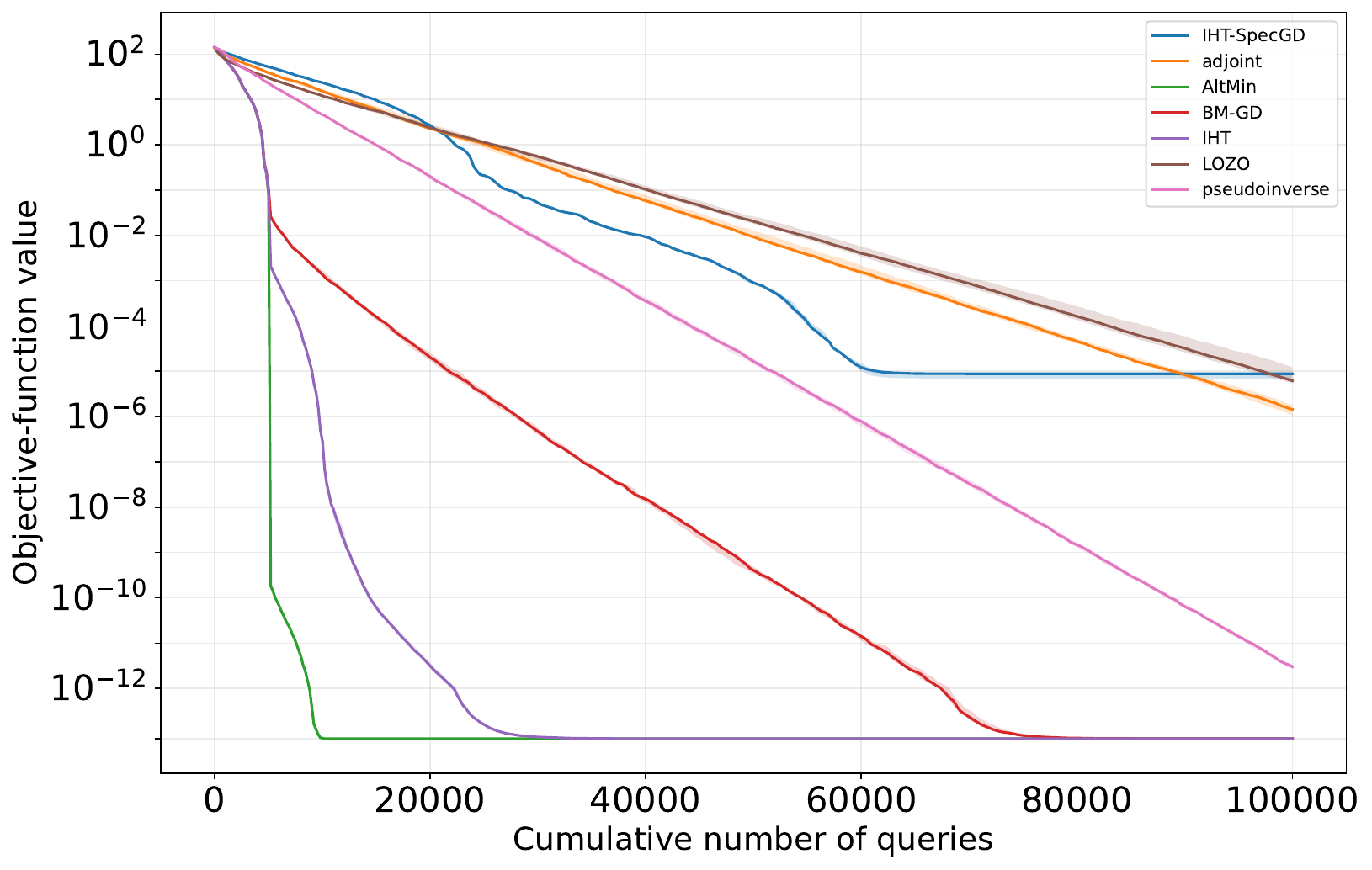}
\hfill
\includegraphics[width=0.48\textwidth]{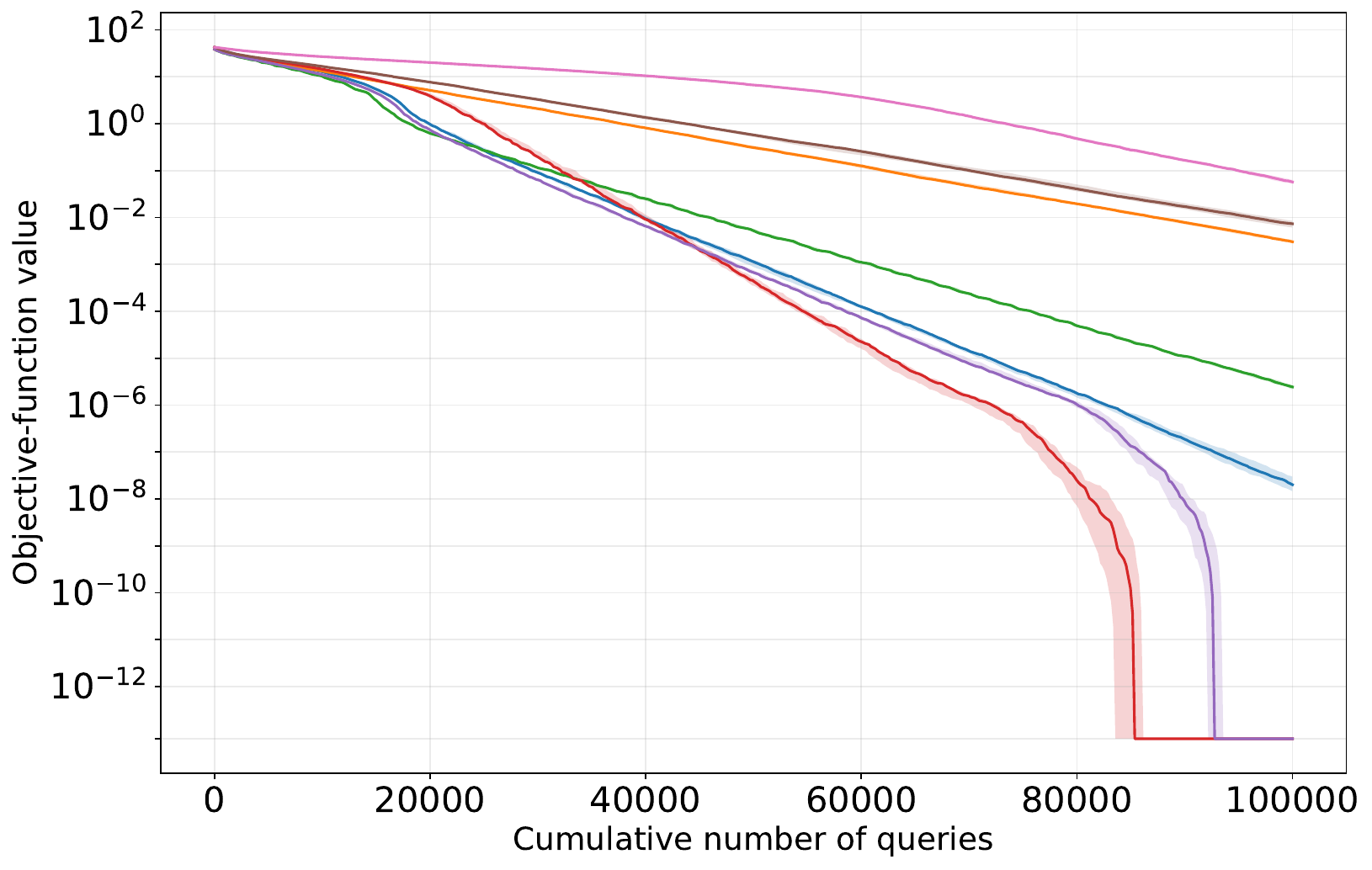}
\caption{Comparing a suite of optimizers on $f_{r\sigma}$ (left) and $f_{\rm mr}$ (right). Shading indicates the 25th--75th percentiles. Solid line indicates the median.}
\label{fig:experimental_results}
\end{figure}

Our results (Figure~\ref{fig:experimental_results}) show: (i) $f_{\rm mr}$ is uniformly more challenging than $f_{r\sigma}$. This is to be expected, as $f_{r\sigma}$ corresponds to a strongly convex function as $r \to m$, whereas $f_{\rm mr}$ becomes merely convex; (ii) with the occasional exception of {\tt IHT-SpecGD}, rank-aware optimizers handily outperform rank-agnostic optimizers, including LOZO. 

\section{Discussion and Limitations}

It appears that rank-aware optimizers for matrix functions warrant further study. However, in order to apply this algorithm to the LLM scale problems discussed in Sections~\ref{sec:introduction} and \ref{sec:low_rank_gradients}, several limitations must be overcome. First, Algorithms~\ref{alg:RA-GD} and \ref{alg:SpecGD} must be adapted to functions of sets of matrices, see Remark~\ref{remark:sets_matrices}. Second, the cost of storing the sampling directions $Z_1,\ldots, Z_d$ must be addressed. In principle this could be done using properties of pseudorandom number generators (pRNG), as proposed by \citet{malladi2023fine}. Specifically, instead of storing $Z_i$ one could just store the seed of the pRNG used, and re-generate $Z_i$ from scratch when needed. In practice, getting this to interact with sophisticated recovery algorithms such as {\tt IHT} appears challenging. Finally, standard first-order optimization tricks such as momentum and gradient-clipping ought to be incorporated.

The notion of gradient recovery, as presented in Section~\ref{sec:Recovery_Algorithms}, could also be explored further. Specifically, we have limited ourselves to a class of algorithms which choose the sampling directions $Z_1, \dots, Z_d$ before using them to estimate the gradient. However, {\em adaptive} sampling methods may well perform better.

Finally, we highlight the potential of {\tt IHT-SpecGD}. While it is not the most performant method in our simple experiments, given the success of {\tt Muon} and relatives on large-scale, first-order matrix optimization problems we feel that it should not be ruled out. 

\section{Conclusion}
We consider the problem of gradient free optimization for matrix functions. We propose a notion of low intrinsic dimensionality adapted to this setting, namely approximately low-rank gradients, and provide empirical evidence that this property occurs naturally when fine-tuning LLMs. When a matrix function satisfies this property techniques from the low-rank matrix sensing literature may be employed to provide improved gradient estimators. We have shown---in theory and in practice---that simple optimization schemes using such {\em rank-aware} gradient estimators are more efficient than rank-agnostic ones when gradients are approximately low rank. Future work will focus on extending the optimization algorithms proposed here to large-scale problems of interest, specifically LLM fine-tuning.


\section*{Statement on AI usage}
The authors used AI tools, particularly Claude Opus 4.8 and Claude Fable, for surfacing references, brainstorming several proofs (particularly Lemma 2), and refining the accompanying codebase. The authors reviewed all AI produced content and take full responsibility for the content of this work.

\section*{Funding}
SB and DM were partially supported by the National Science Foundation under Award No. 2608659. Any opinions, findings and conclusions or recommendations expressed in this material are those of the author(s) and do not necessarily reflect the views of the National Science
Foundation

\bibliography{bibliography}
\bibliographystyle{tmlr}

\appendix

\section{Convergence Lemmas}
We provide two variants of the standard convergence-to-stationarity result for gradient descent, adapted to the two settings in which we analyze the convergence of {\tt RA-GD}. 
Lemma~\ref{lemma:SGD-conv} can be deduced from \cite{bottou2018optimization} (see their Thm.~4.8 and the discussion about assumptions at Eq.~4.9), though we give a proof because it introduces inequalities that will be useful in our later analysis.  Note the improved convergence rate of $\mathcal{O}(1/K)$ compared to the standard rate of $\mathcal{O}(1/\sqrt{K})$ found for SGD methods based on subsampling (cf.~\cite{garrigos2023handbook,patel2022global}).

\begin{lemma}\label{lemma:SGD-conv}
    Suppose that Assumptions~\ref{assumption:bounded_below} and \ref{assumption:L_smooth} hold, and let $G ( X )$ be a unbiased gradient estimator for $\nabla f ( X ),$ that is, that $\EE [ G ( X ) | X ] = \nabla f ( X )$ for any $\RR^{m \times n}$-valued random matrix $X.$ If there exists $M > 0$ such that $G$ satisfies the bound 
    $$
    \EE [ \|G ( X )\|_\fro^2 | X] \leq M \| \nabla f ( X ) \|_\fro^2,
    $$  
    Algorithm~\ref{alg:RA-GD} with $G_k = G ( X_k )$ and $\alpha = \frac 1{ML}$ yields iterates satisfying
$$
\min_{0 \leq k \leq K-1} \EE [ \| \nabla f ( X_k ) \|_\fro^2 ] \leq
\frac{2LM (f(X_0) - f_{\rm low})}K.
$$
\end{lemma}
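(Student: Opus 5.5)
The argument is the standard descent-lemma argument for stochastic gradient descent, with the factor $M$ absorbed into the step size. Assumption~\ref{assumption:L_smooth} gives the quadratic upper bound
\[
f(Y) \leq f(X) + \langle \nabla f(X), Y - X\rangle_\fro + \frac{L}{2}\|Y-X\|_\fro^2 .
\]
Apply it with $X = X_k$ and $Y = X_{k+1} = X_k - \alpha G_k$. This gives
\[
f(X_{k+1}) \leq f(X_k) - \alpha\langle \nabla f(X_k), G_k\rangle_\fro + \frac{L\alpha^2}{2}\|G_k\|_\fro^2 .
\]

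Next take the conditional expectation given $X_k$. Unbiasedness turns the middle term into $-\alpha\|\nabla f(X_k)\|_\fro^2$. The second-moment hypothesis bounds the last term by $\frac{L\alpha^2 M}{2}\|\nabla f(X_k)\|_\fro^2$. With $\alpha = \frac{1}{ML}$ the coefficient becomes $-\alpha + \frac{L\alpha^2M}{2} = -\frac{1}{2ML}$, so
\[
\EE[f(X_{k+1}) \mid X_k] \leq f(X_k) - \frac{1}{2ML}\|\nabla f(X_k)\|_\fro^2 .
\]

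Take total expectation using the tower property, sum over $k = 0,\ldots,K-1$, and telescope. Since $f(X_K) \geq f_{\mathrm{low}}$ by Assumption~\ref{assumption:bounded_below}, this yields
\[
\frac{1}{2ML}\sum_{k=0}^{K-1}\EE\|\nabla f(X_k)\|_\fro^2 \leq f(X_0) - f_{\mathrm{low}} .
\]
The minimum over $k$ is at most the average, and the claimed bound $\frac{2LM(f(X_0)-f_{\mathrm{low}})}{K}$ follows.

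There is no real obstacle, only a few points to state carefully. First, the conditioning is legitimate because $X_k$ depends only on earlier samples, and $G_k$ is drawn freshly given $X_k$. Second, all expectations must be finite. If needed, this follows by induction from the descent inequality together with the second-moment bound, so that the tower property and telescoping apply.
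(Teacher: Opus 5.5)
Your proposal is correct and follows essentially the same route as the paper's proof. Both apply the descent lemma at $X_{k+1} = X_k - \alpha G_k$, then use unbiasedness and the second-moment bound under the conditional expectation to get the per-step decrease $\frac{1}{2ML}\|\nabla f(X_k)\|_\fro^2$, and finish by telescoping with $f(X_K) \geq f_{\mathrm{low}}$ and bounding the minimum by the average. Your remarks on why the conditioning is legitimate and why the expectations are finite cover points the paper leaves implicit.
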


\begin{proof}[Proof of Lemma \ref{lemma:SGD-conv}]
For fixed $k,$ $L$-smoothness gives
\begin{align*}
f(X_{k+1}) 
- f(X_k) 
&\leq
\langle \nabla f (X_k), X_{k+1} - X_k \rangle_\fro + \frac L2 \| X_{k+1} - X_k \|_\fro^2  = -  \alpha 
\big (
\langle \nabla f (X_k), G_k \rangle_\fro 
- \frac{\alpha L}2 \| G_k \|_\fro^2
\big ).
\end{align*}
Taking a conditional expectation gives
\begin{align*}
\EE [ f(X_{k+1}) | X_k ] - f (X_k) 
& =
\EE [ f(X_{k+1} ) - f (X_k)| X_k ] \\
& \leq 
- \alpha \big ( \| \nabla f ( X_k ) \|_\fro^2 -
\frac{\alpha L}2 
\EE [\| G_k \|_\fro^2 | X_k]  \big )
\\ & \leq 
- \alpha \big ( \| \nabla f ( X_k ) \|_\fro^2 -
\frac{\alpha L}2  
M \| \nabla f ( X_k ) \|_\fro^2 \big ) \\
& = 
- \alpha \bigg ( 
1 - \frac{L M \alpha}{2}
\bigg ) \| \nabla f ( X_k ) \|_\fro^2  = 
- 
\frac 1{2 L M} 
\| \nabla f ( X_k ) \|_\fro^2.
\end{align*}

Rearranging and taking an expectation reveals an inequality which telescopes: 
\begin{align*}
& \EE \| \nabla f ( X_k ) \|_\fro^2 \leq 
2 L M ( \EE f ( X_{k+1} ) - \EE f ( X_k ) ) \\
\implies &
\frac 1K \sum_{k=0}^{K-1} \EE \| \nabla f ( X_k ) \|^2_\fro \leq 
\frac{2 L M (f(X_0) - f(X_K))}{K} \\
\implies & \min_{0 \leq k \leq K-1} \EE \| \nabla f ( X_k ) \|_\fro^2 \leq \frac{2 L M (f(X_0) - f_{\rm low}}{K} 
\end{align*}
as the minimum is always less than or equal to the average, and also $f(X_K) \geq f_{\mathrm{low}}$.
\end{proof}

Next we consider the variant where bounds on the gradient estimator hold with high probability.
Such bounds based on relative error in the gradient have been known since at least \citet{polyak1987introduction} (see Thm.~2 in Ch.~4), and been studied more recently in the zeroth-order community. In the deterministic case, the best known rates for the convex case, proven using computer-aided performance estimation techniques, are still $\mathcal{O}(1/K)$ albeit with better constants and looser conditions on the stepsizes~\citep[Thm.~3.1]{VernimmenWorstCase2026}. Recent work in the stochastic case~\citep{hallak2025study} has more refined bounds that are not needed for our analysis since we can avoid the union bound as discussed in Remark~\ref{rmk:no-union-bound-needed}.

 \begin{lemma}\label{HighProbabilityConvergenceLemma}
     Suppose Assumptions \ref{assumption:bounded_below} and \ref{assumption:L_smooth} 
     hold and that the gradient estimator $G(X)$ acquired in line 10 of Algorithm~\ref{alg:RA-GD} satisfies
     \begin{equation}
         \|G_k - \nabla f(X_k)\|_\fro^2 \leq c_2\|\nabla f(X_k)\|_\fro^2 \quad \text{ for } 1\leq k \leq K
         \label{eq:gradient_error_bound}
     \end{equation}
     with probability at least $1 - e^{-c_1d}$. If $\alpha = 1/L$ and $c_2 <1 $ then
     \begin{equation*}
         \min_{0\leq k \leq K-1}\|\nabla f(X_k)\|_\fro^2 \leq \frac{2L\left(f(X_0) - f_{\mathrm{low}}\right)}{(1-c_2)K} 
     \end{equation*}
     with probability at least $1 - e^{-c_1d}$. 
 \end{lemma}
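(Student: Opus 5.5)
We work on the event where the error bound \eqref{eq:gradient_error_bound} holds for every iterate; by assumption this has probability at least $1-e^{-c_1 d}$. We stress that this is a single event and not $K$ separate ones. For the rank-aware estimators the sampling directions are drawn once, so one draw of $\cZ$ that satisfies the needed RIP gives the bound at every $X_k$ simultaneously. Hence no union bound over $k$ is needed, and on this event the argument is fully deterministic.

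The core is a one-step descent inequality. Put $g_k=\nabla f(X_k)$ and $e_k=G_k-g_k$, so $\|e_k\|_\fro^2\le c_2\|g_k\|_\fro^2$. With $\alpha=1/L$, $L$-smoothness (Assumption~\ref{assumption:L_smooth}) gives
\[
f(X_{k+1})-f(X_k)\le -\tfrac1L\langle g_k,G_k\rangle_\fro+\tfrac{1}{2L}\|G_k\|_\fro^2 .
\]
Completing the square, $-\langle g,G\rangle+\tfrac12\|G\|^2=\tfrac12\|G-g\|^2-\tfrac12\|g\|^2$. Therefore
\[
f(X_{k+1})-f(X_k)\le \tfrac{1}{2L}\big(\|e_k\|_\fro^2-\|g_k\|_\fro^2\big)\le -\tfrac{1-c_2}{2L}\|g_k\|_\fro^2 .
\]
This identity is the key step. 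It produces exactly the constant $(1-c_2)/(2L)$ in the claim, and it uses the hypothesis $c_2<1$ only to make the right-hand side negative.

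Next we telescope. Summing the inequality over $k=0,\dots,K-1$ and using $f(X_K)\ge f_{\rm low}$ (Assumption~\ref{assumption:bounded_below}) gives
\[
\frac{1-c_2}{2L}\sum_{k=0}^{K-1}\|g_k\|_\fro^2\le f(X_0)-f_{\rm low}.
\]
Bounding the minimum by the average then yields the stated bound on this event, and so with the stated probability.

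The only delicate point is the indexing and the probabilistic bookkeeping. The hypothesis is stated for $1\le k\le K$, while the sum runs over $0\le k\le K-1$, so we read the hypothesis as holding for every iterate the algorithm uses (shifting indices as needed). We must also make sure the probability refers to one joint event covering all iterates, which is the content of the remark on avoiding the union bound.
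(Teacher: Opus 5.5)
Your proposal is correct and follows the paper's proof essentially step for step: the descent lemma with $\alpha = 1/L$, the completing-the-square identity $\tfrac12\|G_k\|^2 - \langle \nabla f(X_k), G_k\rangle = \tfrac12\|G_k - \nabla f(X_k)\|^2 - \tfrac12\|\nabla f(X_k)\|^2$, the error bound, telescoping with $f(X_K)\ge f_{\mathrm{low}}$, and bounding the minimum by the average. Your reading of the hypothesis as a single joint event over all iterates matches the paper's Remark~\ref{rmk:no-union-bound-needed}. You are also right to flag the $1\le k\le K$ versus $0\le k\le K-1$ index mismatch, which the paper's proof passes over in the same way.
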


 \begin{remark}\label{rmk:no-union-bound-needed}
     Note that we are assuming that with probability $1-e^{-c_1d}$ {\em all} $G_k$ are `good'. That is, they satisfy the error bound \eqref{eq:gradient_error_bound}. A much weaker statement would be that with probability $1-e^{-c_1d}$ a particular $G_k$ is `good'. This type of strong guarantee is possible when using {\tt IHT} (or {\tt AltMin}, or {\tt BMGD}) for gradient recovery, as success is guaranteed as long as the sensing operator $\mathcal{Z}$ satisfies the RIP. As we only sample $\mathcal{Z}$ once, it either has the RIP for all iterates or it does not. The former holds with probability $1-e^{-c_1d}$. 
 \end{remark}

 \begin{proof}[Proof of Lemma \ref{HighProbabilityConvergenceLemma}]
 By Lipschitz smoothness (equivalently, the standard descent lemma), for any $1\leq k \leq K$,
     \begin{align*}
         f(X_{k+1}) \leq f(X_k) - \frac{1}{L}\langle \nabla f(X_k),G_k\rangle_\fro + \frac{1}{2L}\|G_k\|_\fro^2.
     \end{align*}
    Combining this with the identity
     \begin{equation*}
         \frac{1}{2}\|G_k\|_\fro^2 - \langle \nabla f(X_k),G_k\rangle_\fro  = \frac{1}{2}\|G_k - \nabla f(X_k)\|_\fro^2 - \frac{1}{2}\|\nabla f(X_k)\|_\fro^2
     \end{equation*}
     and the gradient error bound \eqref{eq:gradient_error_bound},
     \begin{align*}
          f(X_{k+1}) &\leq f(X_k) - \frac{1}{2L}\|\nabla f(X_k)\|_{\rm }^2 + \frac{1}{2L}\|G_k - \nabla f(X_k)\|_\fro^2 \leq f(X_k) - \frac{1}{2L}\|\nabla f(X_k)\|_\fro^2 + \frac{c_2}{2L}\|\nabla f(X_k)\|_\fro^2  
     \end{align*}
     with probability at least $1 - e^{-c_1d}$. Rearranging again yields an inequality that telescopes:
     \begin{align*}
        &  (1 - c_2)\|\nabla f(X_k)\|_\fro^2 \leq 2L\left(f(X_k) - f(X_{k+1})\right) \\ 
         \implies & \frac{1}{K} \sum_{k=0}^{K-1} \|\nabla f(X_k)\|_\fro^2\leq \frac{2L\left(f(X_0) - f(X_K)\right)}{(1-c_2)K}  \\
         \implies & \min_{0\leq k \leq K-1}\|\nabla f(X_k)\|_\fro^2 \leq \frac{2L\left(f(X_0) - f_{\mathrm{low}}\right)}{(1-c_2)K} 
     \end{align*}  
\end{proof}

\section{Convergence in the rank-agnostic case.} \label{appendix:rank-agnostic}
\newcommand{\Z}{\mathbf{Z}}
We analyze the $\text{\tt RA} = {\tt pseudoinverse}$ and ${\tt RA} = {\tt adjoint}$ cases, presenting proofs for the results of Section~\ref{sec:rank_agnostic_convergence}, as well as several auxiliary lemmas. Both recovery algorithms do not exploit, or benefit from, low-rank structure in $\nabla f(X_k)$. Consequently both algorithms may be analyzed by simply vectorizing the problem. Let $\Z \in \R^{d\times mn}$ be the matrix corresponding to $\cS$. That is, $\Z$ has rows $\mathbf{z}_i^\top = \mathrm{vec}(Z_i)^\top$. 

\subsection{Intermediate lemmas} 

We start with the proof of Lemma~\ref{lemma:moment_bounds_for_pseudoinverse} used for the $\text{\tt RA} = {\tt pseudoinverse}$  case.
\begin{proof}[Proof of Lemma \ref{lemma:moment_bounds_for_pseudoinverse}]
     The matrix corresponding to $\cS^{\dagger}\cS$ is $\Z^{\dagger}\Z$, which we recognize as the projection onto the row space of $\Z$, and
     \begin{equation*}
         \mathrm{Row Space}(\Z) = \mathrm{Span}(\vec{z}_1,\ldots, \vec{z}_d) \cong 
         \mathrm{Span}(Z_1,\ldots, Z_d)
     \end{equation*}
     where $\cong$ denotes the natural isomorphism corresponding to vectorization. 

To get our other results, we first note that by the rotational invariance of the Gaussian distribution, for every orthogonal matrix $Q \in \RR^{mn},$ we have
$$
Q^\top \Z^\dagger \Z Q = 
(\Z Q)^\dagger \Z Q \overset{d}{=} 
\Z^\dagger \Z.
$$
It follows that $Q^\top \EE [\Z^\dagger \Z] Q = \EE [\Z^\dagger \Z]$, i.e., that $\EE [\Z^\dagger \Z]$ commutes with all orthogonal matrices,  and thus one can show via using Householder transformations that every vector is an eigenvector and hence $\EE [ \Z^\dagger \Z ] = a I$ for some $a \in \RR$, where $I$ is the $mn\times mn$ identity matrix. Noting that $\mathrm{Row Space}(\Z)$ almost surely has dimension $\min (d, mn)$, it follows that, as $\Z^\dagger \Z$ is a projection onto this space, $\trace \Z^\dagger \Z = \min ( d, mn )$ almost surely, and therefore
$$
a =  \frac1{mn}\trace \EE [\Z^\dagger \Z] =
\frac {\min(d,mn)}{mn}.
$$

Thus, $\EE [\Z^\dagger \Z] = \frac{\min (d, mn)}{mn} I,$ so $\EE [\cS^\dagger \cS ] =\frac{\min (d, mn)}{mn} {\rm Id},$ and
$$
\EE [ G_k | X_k ] = 
\EE [ \cS^\dagger \cS \nabla f ( X_k ) | X_k ] =
\frac {\min (d, mn)}{mn} \nabla f ( X_k ).
$$
Using the property that $(\cS^\dagger \cS)^2 = \cS^\dagger \cS,$ we have
\begin{equation*}
\EE [ \| G_k \|_\fro^2 | X_k ] = 
\EE [
\trace ( \nabla f ( X_k )^\top
(\cS^\dagger \cS)^2 \nabla f ( X_k ) )
| X_k ] = 
\trace ( \nabla f ( X_k )^\top
\EE [
\cS^\dagger \cS
| X_k ] 
\nabla f ( X_k ) )
\end{equation*}
\begin{equation*}
= \frac{\min (d, mn)}{mn} \| \nabla f ( X_k ) \|_\fro^2.
\end{equation*}
\end{proof}

Next is the proof for Lemma~\ref{lemma:moment_bounds_for_adjoint}, relevant for ${\tt RA} = {\tt adjoint}$, using the same notation $\Z$ as above, and let $\ZZ_{ij}$ denote the $(i,j)$ entry of $\Z$.
\begin{proof}[Proof of Lemma \ref{lemma:moment_bounds_for_adjoint}]
    It is convenient to write $\vec{g}_k = \mathrm{vec}(G_k) \in \mathbb{R}^{mn}$ and $\vec{v}_k = \mathrm{vec}(\nabla f(X_k))\in\mathbb{R}^{mn}$. 
    Since $G_k := \cS^{*}(\vec{y})$ for $\vec{y}=\cS(\nabla f(X_k))$, we can write $\vec{g}_k = \Z^{\top}\Z  \vec{v}_k$.
    We have the following identities:
    \begin{align}
        \|G_k\|_\fro^2 &= \|\vec{g}_k\|_\euc^2 \hspace{3pt} \text{ and } \hspace{3pt} \|\nabla f(X_k)\|_\fro^2 = \|\vec{v}_k\|_\euc^2  \\
        \|\vec{g}_k\|_\euc^2 &= \vec{g}_k^{\top}\vec{g}_k = 
        \vec{v}_k^\top 
        \Z^\top \Z \Z^\top \Z 
        \vec{v}_k. \label{eq:quadS}
    \end{align}
    
    Recall $\Z \in \mathbb{R}^{d\times mn}$ has i.i.d.\ entries sampled from $\mathcal{N}(0,1/d)$ by Assumption~\ref{assumption:sampling_directions}, hence $\mathbb{E}\left[\Z^{\top}\Z\right]=I$ and so 
    \begin{equation*}
        \mathbb{E}[\vec{g}_k] = \mathbb{E}\left[\Z^{\top}\Z\right]\vec{v}_k = \vec{v}_k.
    \end{equation*}

    Expand the $(i,j)$-th entry of $\Z^\top \Z \Z^\top \Z$ as a sum:
    \begin{equation}
       \mathbb{E}\left[\left[\Z^{\top}\Z\Z^{\top}\Z\right]_{ij}\right] = \mathbb{E}\left[\sum_{k=1}^{mn}\sum_{\ell = 1}^{d}\sum_{p=1}^{d}\ZZ_{\ell i}\ZZ_{\ell k} \ZZ_{pk}\ZZ_{pj}  \right]  = \sum_{k=1}^{mn}\sum_{\ell=1}^{d}\sum_{p=1}^{d}\mathbb{E}\left[\ZZ_{\ell i}\ZZ_{\ell k} \ZZ_{pk}\ZZ_{pj}\right] \label{eq:fourth_moments}.
    \end{equation}
    Recall that all $\ZZ_{i_1i_2}$ are sampled i.i.d.\ from $\mathcal{N}(0,1/d)$. Analyzing the summands in \eqref{eq:fourth_moments} requires bounding fourth moments of Gaussian variables. This is done in \citet[Lemma 1]{nesterov2017random} using the bound:
    \begin{equation}
        \mathbb{E}\left[\|\vec{u}\|_\euc^p\right] \leq (p + n)^{p/2} \quad \text{ for } \vec{v} \in \mathbb{R}^n \text{ with } v_i \sim \mathcal{N}(0,1).
    \end{equation}
    Instead, in order to get an exact equality, we use Isserlis' theorem \cite{isserlis1916certain},
    \begin{align*}
        \mathbb{E}\left[\ZZ_{\ell i}\ZZ_{\ell k} \ZZ_{pk}\ZZ_{pj}\right] = \mathbb{E}\left[\ZZ_{\ell i}\ZZ_{\ell k}\right]\mathbb{E}\left[\ZZ_{pk}\ZZ_{pj}\right] +  \mathbb{E}\left[\ZZ_{\ell i}\ZZ_{pk}\right]\mathbb{E}\left[\ZZ_{\ell k}\ZZ_{pj}\right] + \mathbb{E}\left[\ZZ_{\ell i}\ZZ_{pj}\right]\mathbb{E}\left[\ZZ_{\ell k}\ZZ_{pk}\right].
    \end{align*}
    Each second moment is zero unless the indices coincide, in which case it is $1/d$. We deal with each term in the sum individually:
    \begin{align*}
        \sum_{k=1}^{mn}\sum_{\ell=1}^{d}\sum_{p=1}^{d} \mathbb{E}\left[\ZZ_{\ell i}\ZZ_{\ell k}\right]\mathbb{E}\left[\ZZ_{pk}\ZZ_{pj}\right]  
        = \delta_{ij}\sum_{\ell=1}^{d}\sum_{p=1}^{d}\mathbb{E}\left[\ZZ_{\ell i}\ZZ_{\ell i}\right]\mathbb{E}\left[\ZZ_{pj}\ZZ_{pj}\right] 
        = \delta_{ij}\sum_{\ell=1}^{d}\sum_{p=1}^{d}\frac{1}{d^2} = \delta_{ij}
    \end{align*}
    where $\delta_{ij} = \left\{\begin{array}{cc} 1 & \text{ if } i=j \\ 0 & \text{otherwise}\end{array}\right.$ is the Kronecker delta. Similarly
    \begin{equation*}
       \sum_{k=1}^{mn}\sum_{\ell=1}^{d}\sum_{p=1}^{d} \mathbb{E}\left[\ZZ_{\ell i}\ZZ_{pk}\right]\mathbb{E}\left[\ZZ_{\ell k}\ZZ_{pj}\right] = \delta_{ij}\sum_{\ell=1}^d\sum_{p=1}^d \mathbb{E}\left[\ZZ_{\ell i}\ZZ_{pi}\right]\mathbb{E}\left[\ZZ_{\ell j}\ZZ_{pj}\right] = \delta_{ij}\sum_{\ell=1}^d\left(\mathbb{E}\left[\ZZ_{\ell i}\ZZ_{\ell i}\right]\right)^2 = \sum_{\ell=1}^d\frac{1}{d^2} = \frac{1}{d}\delta_{ij}
    \end{equation*}
    and
    \begin{equation*}
        \sum_{k=1}^{mn}\sum_{\ell=1}^{d}\sum_{p=1}^{d} \mathbb{E}\left[\ZZ_{\ell i}\ZZ_{pj}\right]\mathbb{E}\left[\ZZ_{\ell k}\ZZ_{pk}\right] = \delta_{ij}\sum_{k=1}^{mn}\sum_{\ell=1}^d\mathbb{E}\left[\ZZ_{\ell i}\ZZ_{\ell i}\right]\mathbb{E}\left[\ZZ_{\ell k}\ZZ_{\ell k}\right] = \delta_{ij}\sum_{k=1}^{mn}\sum_{\ell=1}^d\left(\frac{1}{d^2}\right) = \delta_{ij}\frac{mn}{d}.
    \end{equation*}
    Consequently
    \begin{equation*}
        \mathbb{E}\left[\left[\Z^{\top}\Z\Z^{\top}\Z\right]_{ij}\right] = \left(1 + \frac{1}{d} + \frac{mn}{d}\right)\delta_{ij}
        \iff 
        \mathbb{E}\left[\Z^{\top}\Z\Z^{\top}\Z\right] = \left(1 + \frac{mn+1}{d}\right)I.
    \end{equation*}
    Recalling \eqref{eq:quadS}, we compute:

\begin{equation*}
        \EE\|\vec{g}_k\|_\euc^2 = 
        \vec{v}_k^\top 
        ( \EE \Z^\top \Z \Z^\top \Z ) \vec{v}_k 
= \left(1 + \frac{mn+1}{d}\right) \vec{v}_k^\top \vec{v}_k 
 = 
        \left(1 + \frac{mn+1}{d}\right) 
        \| \nabla f ( X_k ) \|_\fro^2.
    \end{equation*}
\end{proof}

\subsection{Main results}

\begin{theorem}\label{thm:rankagnostic}
    Suppose Assumptions 1--2 and 4 hold. Then, Algorithm~\ref{alg:RA-GD} with {\tt RA} = {\tt adjoint} and 
    \begin{equation*}
        \alpha = \frac{1}{(1 + \frac{mn + 1}d ) L}
    \end{equation*}
    satisfies 
    \begin{equation*}
        \min_{0 \leq k \leq K-1} \EE \left[
\| \nabla f ( X_k ) \|_\fro^2 \right] \leq
\frac{2 L (1 + \frac{mn + 1}d ) (f (X_0) - f_{\rm low})}{K},
    \end{equation*}
and consequently, visits an $\varepsilon-$stationary (in expectation) point after making
$$
\frac{2 L (d + mn + 1) (f (X_0) - f_{\rm low})}{\epsilon}
$$
directional derivative evaluations.

Algorithm~\ref{alg:RA-GD} with {\tt RA} = {\tt pseudoinverse} and $\alpha = 1/L$ satisfies 
    \begin{equation*}
        \min_{0\leq k \leq K-1}\mathbb{E}\left[\left\|\nabla f(X_k)\right\|_\fro^2\right] \leq \frac{2L ( \frac{mn}{\min(d,mn)} ) \left(f(X_0) - f_{\mathrm{low}}\right)}{K},
    \end{equation*}
and consequently, visits an $\varepsilon-$stationary (in expectation) point after making
$$
\frac{2 L ( d\frac{mn}{\min(d,mn)} ) (f (X_0) - f_{\rm low})}{\epsilon}
$$
directional derivative evaluations.

\end{theorem}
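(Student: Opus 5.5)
The plan is to combine the moment identities of Lemmas~\ref{lemma:moment_bounds_for_adjoint} and \ref{lemma:moment_bounds_for_pseudoinverse} with a descent argument in the spirit of Lemma~\ref{lemma:SGD-conv}, and then convert iteration counts into query counts.

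\emph{The adjoint case.} Lemma~\ref{lemma:moment_bounds_for_adjoint} shows that $G_k$ is unbiased. It also gives $\EE[\|G_k\|_\fro^2 \mid X_k] = M\|\nabla f(X_k)\|_\fro^2$ with $M = 1 + \frac{mn+1}{d}$. So Lemma~\ref{lemma:SGD-conv} applies directly with $\alpha = 1/(ML)$, and it yields the stated bound $2LM(f(X_0)-f_{\rm low})/K$. Each iteration uses $d$ queries. Choosing $K = 2LM\Delta_f/\varepsilon$ makes the right-hand side at most $\varepsilon$. The total query count is then $dK = 2L(d+mn+1)\Delta_f/\varepsilon$.

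\emph{The pseudoinverse case.} Lemma~\ref{lemma:SGD-conv} does not apply as stated, because $G_k$ is biased: $\EE[G_k\mid X_k] = p\nabla f(X_k)$ with $p = \min(d,mn)/(mn)$. Instead I will rerun its proof. By $L$-smoothness with $\alpha = 1/L$,
\[
f(X_{k+1}) \le f(X_k) - \tfrac{1}{L}\langle \nabla f(X_k), G_k\rangle_\fro + \tfrac{1}{2L}\|G_k\|_\fro^2 .
\]
The key observation is that $G_k$ is an orthogonal projection of $\nabla f(X_k)$. Hence $\langle \nabla f(X_k), G_k\rangle_\fro = \|G_k\|_\fro^2$, and the step gives
\[
f(X_{k+1}) \le f(X_k) - \tfrac{1}{2L}\|G_k\|_\fro^2 .
\]
Taking conditional expectations and using $\EE[\|G_k\|_\fro^2\mid X_k] = p\|\nabla f(X_k)\|_\fro^2$, we get
\[
\EE f(X_{k+1}) \le \EE f(X_k) - \tfrac{p}{2L}\,\EE\|\nabla f(X_k)\|_\fro^2 .
\]
Telescoping over $k = 0,\ldots,K-1$, bounding the minimum by the average, and using $f(X_K)\ge f_{\rm low}$ gives the claimed rate with factor $1/p = mn/\min(d,mn)$. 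Multiplying the required number of iterations by $d$ queries per iteration gives the stated query complexity.

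\emph{Main obstacle.} The only nonroutine step is handling the bias in the pseudoinverse case. I expect the projection identity $\langle \nabla f, G\rangle_\fro = \|G\|_\fro^2$ to resolve it cleanly, since it removes any need for unbiasedness. The remaining steps are bookkeeping: note that $X_k$ is independent of the fresh $Z_i$ drawn at iteration $k$, so the conditional moments apply, and then apply the tower property.
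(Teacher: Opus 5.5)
Your proof is correct. The adjoint case is the paper's argument exactly: apply Lemma~\ref{lemma:SGD-conv} with $M = 1+\frac{mn+1}{d}$, then count $dK$ queries.

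For the pseudoinverse case you take a different route. The paper removes the bias with a rescaling trick. It sets $\tilde G_k = \frac{mn}{\min(d,mn)}G_k$ and $\tilde\alpha = \frac{\min(d,mn)}{mn}\alpha$, which leaves the iterates unchanged but makes the estimator unbiased with $M = \frac{mn}{\min(d,mn)}$. Lemma~\ref{lemma:SGD-conv} then applies as a black box.

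You instead reopen the descent step and use that $G_k$ is an orthogonal projection of $\nabla f(X_k)$. This gives $\langle \nabla f(X_k), G_k\rangle_\fro = \|G_k\|_\fro^2$, so every step satisfies the pathwise sufficient-decrease inequality $f(X_{k+1}) \le f(X_k) - \frac{1}{2L}\|G_k\|_\fro^2$. Both routes give the same constant $\frac{2Lmn}{\min(d,mn)}$.

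The paper's route is more modular, since it needs no new descent computation. However, it relies on the exact first-moment identity $\EE[G_k\mid X_k] = \frac{\min(d,mn)}{mn}\nabla f(X_k)$, i.e.\ on $\EE[\cS^\dagger\cS]$ being a scalar multiple of the identity.

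Your route never uses the first moment and yields almost-sure monotonicity of $f(X_k)$. It also needs only the one-sided bound $\EE[\|G_k\|_\fro^2 \mid X_k] \ge p\,\|\nabla f(X_k)\|_\fro^2$. So it carries over unchanged to any scheme that projects the gradient onto a random subspace with $\EE[\text{projector}] \succeq p\, I$. This includes the orthogonalized-directions scheme of \citet{kozak2021stochastic} and non-isotropic sampling distributions, where the paper's rescaling argument would not directly apply.
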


\begin{proof}[Proof of Theorem \ref{thm:rankagnostic}]
The result for {\tt RA}={\tt adjoint} follows directly from applying the result from Lemma \ref{lemma:SGD-conv} to the bounds in Lemma \ref{lemma:moment_bounds_for_adjoint}.

The bounds in Lemma~\ref{lemma:moment_bounds_for_pseudoinverse} for {\tt RA}={\tt pseudoinverse} do not exactly match the hypotheses for Lemma \ref{lemma:SGD-conv}. However, if we define $G_k$ to be the pseudoinverse estimator applied to $f$ and $\tilde{G}_k = \frac{mn}{\min(d,mn)} G_k,$ we can see that Algorithm~\ref{alg:RA-GD} with step size $\alpha$ and gradient estimator $G_k$ is equivalent to Algorithm~\ref{alg:RA-GD} with step size $\tilde{\alpha} = \frac{\min(d,mn)}{mn} \alpha$ and gradient estimator $\tilde{G}_k.$ Furthermore, Lemma~\ref{lemma:moment_bounds_for_pseudoinverse} implies that
\begin{equation*}
  \EE [\tilde G_k | X_k] = \nabla f ( X_k ) \quad \text{ and } \quad \EE [ \| \tilde G_k \|_\fro^2 | X_k ] = \frac{mn}{\min(d,mn)} \| \nabla f ( X_k ) \|_\fro^2
\end{equation*}
whence we may apply Lemma~\ref{lemma:SGD-conv} with $M = \frac{mn}{\min(d,mn)}$ and $\tilde{\alpha} = \frac{\min(d,mn)}{mnL},  $ to get the desired convergence.
\end{proof}

\section{Convergence in the rank-aware case}  \label{AP:rankAwareConvergence}
We first establish conditions under which the RIP (Definition~\ref{def:rip}) holds, with high probability.  
\begin{lemma}
   Suppose Assumption \ref{assumption:sampling_directions} holds. Then there exist constants $c_0, c_1 > 0$, depending only on $\delta$, such that
     \begin{equation}
         \delta_s(\cS) \leq \delta \text{ with probability } 1 - e^{-c_1d}
         \label{eq:rip_prob_bound}
     \end{equation}
     whenever $d \geq c_0s(m + n)$.
     \label{thm:RIP_bounds}
\end{lemma}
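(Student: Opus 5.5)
The plan is the standard covering-number argument for Gaussian sensing operators (Cand\`es--Plan; Recht--Fazel--Parrilo), in three steps: a concentration bound for one fixed matrix, a net over the low-rank unit sphere, and a union bound with an extension step.

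\emph{Step 1: fixed matrix.} By homogeneity it suffices to bound $\|\cS(G)\|_2^2$ for $G$ in
\[
S_s = \{G : \rank(G)\le s,\ \|G\|_F = 1\}.
\]
Fix $G \in S_s$. Under Assumption~\ref{assumption:sampling_directions}, the entries $\langle Z_i, G\rangle_F$ are i.i.d.\ $\mathcal{N}(0, \|G\|_F^2/d) = \mathcal{N}(0,1/d)$. Hence $d\|\cS(G)\|_2^2 \sim \chi^2_d$. The Laurent--Massart (or Bernstein) inequality then gives, for $t\in(0,1)$,
\[
\Pr\big(|\|\cS(G)\|_2^2 - 1| > t\big) \le 2e^{-d\,c(t)}, \qquad c(t) = t^2/4 - t^3/6 > 0.
\]

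\emph{Step 2: net over $S_s$.} Write $G = U\Sigma V^\top$ with $U\in\R^{m\times s}$, $V\in\R^{n\times s}$ orthonormal and $\|\Sigma\|_F = 1$. Take $\epsilon/3$-nets of each factor: the orthonormal $U$'s in the $\|\cdot\|_{1,2}$ (max column norm) sense, the orthonormal $V$'s likewise, and the diagonals $\Sigma$ on the unit sphere of $\R^s$. Each factor costs at most $(9/\epsilon)^{ms}$, $(9/\epsilon)^{ns}$ and $(9/\epsilon)^{s}$ respectively. A triangle-inequality decomposition, e.g.
\[
G - \bar G = (U-\bar U)\Sigma V^\top + \bar U(\Sigma-\bar\Sigma)V^\top + \bar U\bar\Sigma(V-\bar V)^\top,
\]
shows the product gives an $\epsilon$-net $\bar S$ of $S_s$ in Frobenius norm with $|\bar S| \le (9/\epsilon)^{(m+n+1)s}$.

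\emph{Step 3: union bound and extension.} Applying Step 1 to every point of $\bar S$, the concentration holds on all of $\bar S$ with probability at least
\[
1 - 2(9/\epsilon)^{(m+n+1)s} e^{-c(t)d}.
\]
This is at least $1 - e^{-c_1 d}$ provided $d \ge c_0 s(m+n)$, with $c_0 \gtrsim 3\log(9/\epsilon)/c(t)$ and $c_1 = c(t)/2$ (absorbing the $+1$ into $m+n$). Choose $t=\delta/2$ and $\epsilon$ a small multiple of $\delta$; this is why $c_0,c_1$ depend only on $\delta$.

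The main obstacle is extending from the net to all of $S_s$, because $G - \bar G$ has rank up to $2s$, not $s$. I would handle it with the usual bootstrap. Let $\kappa$ be the smallest constant with $\|\cS(X)\|_2 \le (1+\kappa)\|X\|_F$ for all rank-$s$ matrices $X$. The difference $G-\bar G$ can be split into two Frobenius-orthogonal pieces of rank at most $s$ each, e.g.\ by projecting onto the column spaces. This gives
\[
\|\cS(G)\|_2 \le \|\cS(\bar G)\|_2 + \sqrt{2}(1+\kappa)\epsilon \le 1 + \tfrac{\delta}{2} + \sqrt{2}(1+\kappa)\epsilon .
\]
Taking the supremum over $G$ yields $\kappa \le \big(\tfrac{\delta}{2} + \sqrt2\epsilon\big)/(1-\sqrt2\epsilon)$, and the analogous argument gives the lower bound. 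Choosing $\epsilon = c\,\delta$ for a small absolute $c$ gives $(1-\delta)\le\|\cS(G)\|_2^2\le(1+\delta)$ on $S_s$, i.e.\ $\delta_s(\cS)\le\delta$.

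Alternatively, the whole statement can be cited directly as Theorem~2.3 of Cand\`es--Plan (2011) or Theorem~4.2 of Recht--Fazel--Parrilo, since Assumption~\ref{assumption:sampling_directions} is exactly their Gaussian measurement ensemble.
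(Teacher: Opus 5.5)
Your argument is correct and is essentially the paper's: the paper simply cites \citet[Theorem 2.3]{candes2011tight}, and your argument reproduces the proof given there, namely an $\epsilon$-net of size $(9/\epsilon)^{(m+n+1)s}$, a union bound, and a bootstrap that splits $G-\bar G$ orthogonally into two rank-$s$ pieces. One bookkeeping fix: in the extension step use $\|\cS(\bar G)\|_2\le\sqrt{1+\delta/2}\le 1+\delta/4$ rather than $1+\delta/2$, because $(1+\delta/2)^2>1+\delta$, so with the looser bound no choice of $\epsilon$ closes the squared upper bound in Definition~\ref{def:rip}.
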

See \citet[Theorem 2.3]{candes2011tight} for a proof. We restate and then prove Thm.~\ref{thm:IHT_gradient_recovery}.

\setcounter{theorem}{0} 
\begin{theorem}
    Suppose Assumptions~\ref{assumption:L_smooth}, \ref{assumption:low-rank}, and \ref{assumption:sampling_directions} hold. Choose $c_0$ in Lemma~\ref{thm:RIP_bounds} large enough such that $\delta_{2r}(\cS) < 1/3$ holds with probability $1 - e^{-c_1d}$. Suppose that $\hat{G} = G^t$ where $G^t$ is found using 
    \begin{equation*}
        t \geq \tilde{D}\log\left(\frac{\|\vec{y}\|_\euc}{\left(\tilde{C}\frac{m}{r} + 2\right)\delta^2\|\nabla f(X_k)\|_\fro^2}\right)
    \end{equation*}
    iterations of {\tt IHT} (see \eqref{eq:iterative_hard_thresholding}) with step-size $\eta = 1/(1+\delta_{2r}(\cS))$, where $\delta$ is the value occurring in the definition of $(\delta,r)$-low-rank (see Assumption~\ref{assumption:low-rank}). Then 
    \begin{equation*}
        \|\nabla f(X_k) - \hat{G}\|_\fro^2 \leq \left(8C\frac{m}{r} + 5\right)\delta^2\|\nabla f(X_k)\|_\fro^2
    \end{equation*}
    holds with probability $1 - e^{-c_1d}$, for universal constants $C,\tilde{C}$, and $\tilde{D}$. 
\end{theorem}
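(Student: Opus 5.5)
The plan is to treat gradient recovery as noisy low-rank matrix sensing and invoke the standard robust IHT guarantee of \citet{jain2010guaranteed}. Write $A := \nabla f(X_k)$, $A_r := \Projr(A)$, and $E := A - A_r$. Assumption~\ref{assumption:low-rank} gives $\|E\|_\fro \leq \delta\|A\|_\fro$. The measurements then decompose as
\begin{equation*}
\y = \cS(A) = \cS(A_r) + \vec{e}, \qquad \vec{e} := \cS(E).
\end{equation*}
This is a rank-$r$ sensing problem with additive noise $\vec{e}$. On the event of Lemma~\ref{thm:RIP_bounds} that $\delta_{2r}(\cS) < 1/3$, which has probability at least $1-e^{-c_1d}$, the IHT analysis with step $\eta = 1/(1+\delta_{2r})$ yields a contraction of the form
\begin{equation*}
\|G^{t} - A_r\|_\fro \leq \rho^t \|A_r\|_\fro + C'\|\vec{e}\|, \qquad \rho<1.
\end{equation*}
I would rederive this briefly: expand $G^{t+1}$, use the best-rank-$r$ optimality of $\Projr$ restricted to the rank-$2r$ span of $G^t$, $G^{t+1}$ and $A_r$, and apply the RIP. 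Choosing $t \geq \tilde{D}\log(\|\y\|/\cdots)$ makes the transient term $\rho^t\|A_r\|_\fro$ (bounded via $\|\y\|$) at most a constant multiple of $\delta\|A\|_\fro$.

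The main obstacle is bounding $\|\vec{e}\| = \|\cS(E)\|$. Here $E$ is \emph{not} low rank, so the RIP does not apply directly. A naive operator-norm bound $\|\cS\|\lesssim \sqrt{mn/d}$ is too lossy. Instead I would use the standard trick from \citet{candes2011tight} / Needell–Tropp. Split $E$ into blocks $E_1, E_2, \ldots$ of $r$ consecutive singular components, apply RIP to each block, and sum:
\begin{equation*}
\|\cS(E)\| \leq \sqrt{1+\delta_r}\Big(\|E\|_\fro + \tfrac{1}{\sqrt r}\|E\|_\star\Big).
\end{equation*}
Then use $\|E\|_\star \leq \sqrt{m}\,\|E\|_\fro$, since $E$ has rank at most $m$. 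This gives $\|\vec{e}\|^2 \lesssim (1 + m/r)\,\delta^2\|A\|_\fro^2$, which is the source of the $m/r$ factor in $C_{\text{\tt IHT}}$.

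To finish, apply the triangle inequality together with $(a+b+c)^2 \leq$ weighted sums of squares:
\begin{equation*}
\|\hat G - A\|_\fro^2 \leq 2\|\hat G - A_r\|_\fro^2 + 2\|E\|_\fro^2.
\end{equation*}
Inserting the noise bound, the iteration-count bound and $\|E\|_\fro\leq\delta\|A\|_\fro$ collects everything into the form $(8C\,m/r + 5)\delta^2\|A\|_\fro^2$. The constant $C$ absorbs the IHT noise amplification $C'$ and the RIP factor $(1+\delta_r)\leq 4/3$. All statements hold on the single RIP event, hence with probability $1-e^{-c_1d}$. The bookkeeping of the constants 8 and 5 is routine; the block-decomposition noise bound is where care is needed.
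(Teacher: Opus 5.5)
Your plan is essentially the paper's proof: it too splits $\nabla f(X_k)$ into $\Projr(\nabla f(X_k))$ plus a tail $G^\star_c$, bounds $\|\cS(G^\star_c)\|$ by exactly your block/RIP estimate (cited there as Goldfarb--Ma, Prop.~2.8) together with a nuclear-to-Frobenius conversion that produces the $m/r$ factor, invokes the robust SVP/IHT theorem of Jain et al.\ under $\delta_{2r}(\cS)<1/3$ with $\eta = 1/(1+\delta_{2r}(\cS))$, and finishes with $\|\hat G - \nabla f(X_k)\|_F^2 \le 2\|\hat G - \Projr(\nabla f(X_k))\|_F^2 + 2\|G^\star_c\|_F^2$ and $2/(1-\delta_{2r}(\cS)) < 3$ to reach $8C\,m/r + 5$ (your $\sqrt{m}$ in place of the paper's $\sqrt{m-r}$ plus Jensen is immaterial). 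One caution if you rederive the contraction rather than cite it: $G^t$, $G^{t+1}$ and $\Projr(\nabla f(X_k))$ together have rank up to $3r$, not $2r$, so a Frobenius-norm contraction argument of that type needs control of $\delta_{3r}$; the objective-value argument of Jain et al.\ is what works under the stated hypothesis $\delta_{2r}(\cS)<1/3$, because it only involves $G^{t+1}-G^t$ and $\Projr(\nabla f(X_k))-G^t$, both of rank at most $2r$.
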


\begin{proof}
    Write $G^{\star} := \nabla f(X_k)$ as $G^{\star} = G^{\star}_r + G^{\star}_c$ where $G^{\star}_r = \Projr(G^{\star})$. Then 
    \begin{align*}
        \vec{y} &= \cS(G^{\star}) = \cS(G^{\star}_r) + \cS(G^{\star}_c)
    \end{align*}
    Letting $\|\cdot\|_{\star}$ denote the nuclear norm (i.e., the sum of the singular values),  
    \begin{align}
        \|\cS(G^{\star}_c)\|_\euc  &\leq \sqrt{1 + \delta_r(\cS)}\left(\|G^{\star}_c\|_\fro + \frac{1}{\sqrt{r}}\|G^{\star}_c\|_{\star}\right) \label{eq:from_goldfarb} \\
         &\leq
             \frac{2}{\sqrt{3}}\bigg(\delta\|\nabla f(X_k)\|_\fro + \frac{1}{\sqrt{r}}\left(\delta\|\nabla f(X_k)\|_\fro\sqrt{m-r}\right) \bigg) \label{eq:l_1_l_2_norm_relationship}\\
         & = \frac{2\delta}{\sqrt{3}}\left(1 + \sqrt{\frac{m-r}{r}}\right)\|\nabla f(X_k)\|_\fro \\ 
       & \leq \frac{4\delta}{\sqrt{3}}\sqrt{\frac{m}{2r}} \|\nabla f(X_k)\|_\fro\label{eq:Jensen}
    \end{align}
    Equation~\eqref{eq:from_goldfarb} follows from \cite[Prop. 2.8]{goldfarb2011convergence}. Equation~\eqref{eq:l_1_l_2_norm_relationship} follows from Assumption~\ref{assumption:low-rank} and 
    \begin{equation*}
    \|G^{\star}_c\|_{\star} = \sum_{i=r+1}^{m}\sigma_i(G^{\star})  \leq \sqrt{m-r}\sqrt{\sum_{i=r+1}^{m} (\sigma_i(G^{\star}))^2} \leq \left(\sqrt{m-r}\right)\left(\delta\|\nabla f(X_k)\|_\fro\right).
    \end{equation*}
    Equation~\eqref{eq:Jensen} follows from an application of Jensen's inequality:
    \begin{equation*}
        \frac{1}{2}\sqrt{1} + \frac{1}{2}\sqrt{\frac{m-r}{r}} \leq \sqrt{\frac{1 + \frac{m-r}{r}}{2}} = \sqrt{\frac{m}{2r}}
    \end{equation*}
Appealing to \citet[Theorem 1.2]{jain2010guaranteed}, (with $\vec{e} = \cS(G^{\star}_c)$, their $\vec{b}$ replaced by our $\vec{y}$, and setting their $\epsilon = \delta^2$), we obtain
\begin{equation}
        \|G^{\star}_r - \hat{G}\|^2_\fro \leq \frac{1}{1-\delta_{2r}(\cS)}\Bigg(C\frac{8\delta^2\|\nabla f(X_k)\|_\fro^2}{3}\frac{m}{r} + \delta^2\|\nabla f(X_k)\|_\fro^2\Bigg)
        \label{eq:IHT_error_bound}
\end{equation}
provided
\begin{equation}
    t \geq \frac{1}{\log(1/D)}\log \left(\frac{\|\vec{y}\|_\euc^2}{2(C\|\cS(G^{\star}_c)\|_\euc^2 + \delta^2\|\nabla f(X_k)\|_\fro^2)}\right).
    \label{eq:IHT_iterate_bound}
\end{equation}
where $C,D$ are universal constants. We simplify the error bound \eqref{eq:IHT_error_bound} first:
\begin{align*}
\|G^{\star} - \hat{G}\|_\fro^2  &\leq 2\|G^{\star}_r - \hat{G}\|^2_\fro + 2\|G^{\star}_c \|^2_\fro \\
& \leq\frac{2}{1-\delta_{2r}(\cS)} \left(C\frac{8}{3}\frac{m}{r} + 1 \right)
    \delta^2\|\nabla f(X_k)\|_\fro^2  + 2\delta^2\|\nabla f(X_k)\|_\fro^2 \\
& \leq \left(8C\frac{m}{r} + 5\right)\delta^2\|\nabla f(X_k)\|_\fro^2
\end{align*}
Next we simplify the iterate bound \eqref{eq:IHT_iterate_bound}. Appealing to \eqref{eq:Jensen} and defining $\tilde{C}$ appropriately yields
\begin{equation*}
   2(C\|\cS(G^{\star}_c)\|_\euc^2 + \delta^2\|\nabla f(X_k)\|_\fro^2) \leq 2\left(C\frac{16\delta^2\|\nabla f(X_k)\|_\fro^2}{3}\frac{m}{2r} + \delta^2\|\nabla f(X_k)\|_\fro^2\right) 
\end{equation*}
\begin{equation*}
   = \left(\tilde{C}\frac{m}{r} + 2\right)\delta^2\|\nabla f(X_k)\|_\fro^2. 
\end{equation*}
Finally we define $\tilde{D} = \frac{1}{\log(1/D)}$. 
\end{proof}

\begin{remark}
    Notice that the required number of iterates scales like $-\log\left(\|\nabla f(X_k)\|\right)$, and consequently increases as $\|\nabla f(X_k)\| \to 0$. In practice, we fix the number of iterates equal to $20$, which should suffice until $\|\nabla f(X_k)\| \lesssim e^{-20}$.  
\end{remark}
Then using the above result, we are ready to state and prove our main convergence result in this setting:
\begin{theorem}
 Suppose Assumptions~\ref{assumption:L_smooth}, \ref{assumption:low-rank}, and \ref{assumption:sampling_directions} hold and let $d  = c_0r(m+n)$. After running $K$ iterations of Algorithm \ref{alg:RA-GD}, using IHT with step size $\alpha = 1/L$, we find

 \begin{equation*}
     \min_{0 \leq k \leq K-1} \|\nabla  f(X_k)\|_\fro^2 \leq \frac{2L(f(X_0) - f_\text{low})}{(1-\delta^2(8C\frac{m}{r}+5)) K} 
 \end{equation*}
 with probability at least $1-e^{-c_1 d}$, where $c_0,c_1,$ and $C$ are the constants specified in Theorem \ref{thm:IHT_gradient_recovery}.
\end{theorem}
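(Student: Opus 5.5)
The plan is to combine the per-iterate recovery guarantee of Theorem~\ref{thm:IHT_gradient_recovery} with the deterministic-on-an-event descent argument of Lemma~\ref{HighProbabilityConvergenceLemma}, taking $c_2 = \delta^2(8C\frac{m}{r}+5) = \delta^2 C_{\text{\tt IHT}}(m,r)$. The statement implicitly requires $c_2<1$, i.e.\ $C_{\text{\tt IHT}}(m,r)<\delta^{-2}$, as in Theorem~\ref{thm:IHT_convergence}, and I will carry this hypothesis along. Nothing new is needed beyond assembling these pieces, but the probabilistic bookkeeping has to be done carefully.

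\textbf{Step 1: define a single good event.} Let $\mathcal{E}$ be the event that the sensing operator $\cS$, sampled once in line~3 of Algorithm~\ref{alg:RA-GD}, satisfies $\delta_{2r}(\cS)<1/3$. With $d=c_0 r(m+n)$ and $c_0$ chosen as in Theorem~\ref{thm:IHT_gradient_recovery}, Lemma~\ref{thm:RIP_bounds} with $s=2r$ gives $\mathbb{P}(\mathcal{E})\ge 1-e^{-c_1 d}$. If the factor $2$ in $s=2r$ is not absorbed into $c_0$, I will simply enlarge $c_0$.

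\textbf{Step 2: uniform recovery on $\mathcal{E}$.} The key observation, already recorded in Remark~\ref{rmk:no-union-bound-needed}, is that the proof of Theorem~\ref{thm:IHT_gradient_recovery} uses randomness only through the RIP of $\cS$. The remaining ingredients are deterministic given $\mathcal{E}$: the split $G^\star=G^\star_r+G^\star_c$, the bound of \citet[Prop.~2.8]{goldfarb2011convergence}, the Assumption~\ref{assumption:low-rank} bound on $\|G^\star_c\|_\star$, and the IHT guarantee of \citet{jain2010guaranteed}. These hold for any gradient matrix satisfying the $(\delta,r)$ condition. Each $X_k$ is a (possibly $\cS$-dependent) random point, but on $\mathcal{E}$ the inequality
\[\|G_k-\nabla f(X_k)\|_\fro^2\le \delta^2\Big(8C\tfrac{m}{r}+5\Big)\|\nabla f(X_k)\|_\fro^2\]
holds simultaneously for all $k$, provided enough inner iterations are run at each $k$ (the iterate condition in Theorem~\ref{thm:IHT_gradient_recovery}). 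I expect this step to be the main obstacle. The dependence of $X_k$ on $\cS$ rules out a naive application of a ``fixed point, random $\cS$'' statement, so I must argue explicitly that the recovery bound is a deterministic consequence of the RIP, uniformly over all approximately-low-rank targets. If $\nabla f(X_k)=0$ for some $k$, then $\vec y=0$, IHT returns $0$, and the bound holds trivially.

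\textbf{Step 3: descent and telescoping.} On $\mathcal{E}$, hypothesis \eqref{eq:gradient_error_bound} holds for every $k$, so I will run the proof of Lemma~\ref{HighProbabilityConvergenceLemma} with $\alpha=1/L$. The descent lemma gives
\[f(X_{k+1})\le f(X_k)-\tfrac{1}{2L}\|\nabla f(X_k)\|_\fro^2+\tfrac{1}{2L}\|G_k-\nabla f(X_k)\|_\fro^2.\]
Inserting $c_2$ from Step~2, summing over $k=0,\dots,K-1$, and using $f(X_K)\ge f_{\rm low}$ (Assumption~\ref{assumption:bounded_below}) yields
\[(1-c_2)\frac1K\sum_{k=0}^{K-1}\|\nabla f(X_k)\|_\fro^2\le \frac{2L\big(f(X_0)-f_{\rm low}\big)}{K}.\]
Bounding the minimum by the average then gives the claim, which holds on $\mathcal{E}$ and hence with probability at least $1-e^{-c_1 d}$.
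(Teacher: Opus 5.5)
Your proposal is correct and follows the paper's proof. The paper likewise invokes Theorem~\ref{thm:IHT_gradient_recovery} to get the relative error bound at every iterate on the single RIP event, then applies Lemma~\ref{HighProbabilityConvergenceLemma} with $c_2 = \delta^2 C_{\text{\tt IHT}}(m,r)$. You state two points more explicitly than the paper: the hypothesis $c_2<1$, which the appendix restatement omits, and the need for a recovery bound that is uniform because $X_k$ depends on $\cS$, which the paper handles only through Remark~\ref{rmk:no-union-bound-needed}.
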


\begin{proof}
By Theorem \ref{thm:IHT_gradient_recovery}, we know 
\begin{equation*}
        \|\nabla f(X_k) - \hat{G}\|_\fro^2 \leq \left(8C\frac{m}{r} + 5\right)\delta^2\|\nabla f(X_k)\|_\fro^2
    \end{equation*}
will hold for every iteration of Algorithm \ref{alg:RA-GD} with probability $1-e^{-c_1 d}$. Thus, applying Lemma \ref{HighProbabilityConvergenceLemma} with
\begin{equation*}
   c_2 = \left(8C\frac{m}{r} + 5\right)\delta^2 := C_{\rm IHT}(m,r)\delta^2 
\end{equation*}
we have the desired result.
\end{proof}

\setcounter{theorem}{4} 

\subsection{Proof of Theorem~\ref{thm:spec_gd_convergence}}
\begin{proof}
    Setting $X^{+} = X_{k+1}$, $M = G_k$ and $\eta=\alpha_k$ in the descent lemma variant of \citet[Lemma~4]{he2025low} gives 
    \begin{equation}
        f(X_{k+1}) \leq  f(X_k) - \alpha_k\|\nabla f(X_k)\|_\star  + 2\alpha_k\|G_k - \nabla f(X_k) \|_{\star} + \frac{L_{\star}\alpha_k^2}{2}.
    \label{eq:lemma_from_he2025}
    \end{equation}
    since our Assumptions~\ref{assumption:bounded_below} and \ref{assumption:L_smooth_nuclear} are equivalent to the assumptions of \citet[Lemma~4]{he2025low}. Then 
    \begin{align*}
    \|G_k-\nabla f(X_k) \|_\star &\le \sqrt{m}\|G_k-\nabla f(X_k) \|_\fro \leq \sqrt{mC_{\text{\tt IHT}}(m,r)} \delta\|\nabla f(X_k)\|_\fro \leq  \sqrt{mC_{\text{\tt IHT}}(m,r)}\delta\|\nabla f(X_k)\|_\star
    \end{align*}
    with probability $1 - e^{-c_1d}$ by Theorem~\ref{thm:IHT_gradient_recovery}, having used the equivalence of norms $\|A\|_{\fro} \leq \|A\|_{\star} \leq \sqrt{m}\|A\|_\fro$. Substituting this into \eqref{eq:lemma_from_he2025}, rearranging, and letting $\alpha_k = 1/\sqrt{K}$ gives 
    \begin{equation*}
        \begin{split}
            f(X_{k+1}) - f(X_k) \leq - \frac{1 - 2\sqrt{mC_{\text{\tt IHT}}}\delta}{\sqrt{K}}\|\nabla f(X_k)\|_\star + \frac{L_{\star}}{2K},
        \end{split}
    \end{equation*}
    for all $1\leq k \leq K$, with probability $1 - e^{-c_1d}$. Telescoping and using Assumption~\ref{assumption:bounded_below}:
    \begin{align*}
        & \frac{1 - 2\sqrt{mC_{\text{\tt IHT}}}\delta}{\sqrt{K}}\sum_{k=0}^{K-1}\|\nabla f(X_k)\|_{\star} \leq f(X_0) - f_{\mathrm{low}} + \frac{L_\star}{2K} \\
    \implies & \frac{1}{K}\sum_{k=0}^{K-1}\|\nabla f(X_k)\|_{\star} \leq \frac{1}{1-2\sqrt{mC_{\text{\tt IHT}}}\delta}\left(\frac{f(X_0) - f_{\mathrm{low}}}{\sqrt{K}} + \frac{L_{\star}}{2K^{3/2}}\right).  
    \end{align*}
    The claim now follows by once more using the fact that the minimum is less than the mean.
\end{proof}


\section{Implementation of Recovery Algorithms}
\label{app:Implementation_Details}
The precise implementation of all algorithms considered may be found in the accompanying codebase, see supplementary material. For each (optimizer, objective function) we selected the most performant value of $d$, the number of samples per iteration, from $\{8, 32, 64, 128, 256, 512\}$. The selected values are displayed in Table~\ref{tab:tuned-hyperparameters}. We set the number of inner iterations for {\tt IHT}, {\tt BM-GD}, and {\tt AltMin} to $20$. Finally, we tune the rank of the sampling matrices in LOZO over $\{4, 8, 16\}$. For $f_{r\sigma}$ a rank of $16$ was best, while for $f_{\rm mr}$ a rank of $4$ was best.

\begin{table}[h]
  \centering
  \caption{Tuned number of samples per iteration ($d$) for each algorithm.
  All recovery-based methods use 20 recovery iterations.}
  \label{tab:tuned-hyperparameters}
  \begin{tabular}{lcc}
    \toprule
    Algorithm & $d$ for $f_{\rm mr}$ & $d$ for $f_{r\sigma}$ \\
    \midrule
    {\tt IHT-SpecGD}    & 256 & 512 \\
    {\tt adjoint}       & 128 & 128 \\
    {\tt AltMin}        & 256 & 512 \\
    {\tt BM-GD}         & 256 & 512 \\
    {\tt IHT}           & 256 & 512 \\
    {\tt LOZO}          &  64 &  64 \\
    {\tt pseudoinverse} & 256 & 512 \\
    \bottomrule
  \end{tabular}
\end{table}

\section{Connection to derivative-free optimization}\label{ap:connection_to_dfo}
In DFO, one only has access to function evaluations, not directional derivatives. As is well known, function evaluations can be used to approximate directional derivatives. Here we bound the error in this approximation.
\begin{lemma}
\label{lemma:directional_derivatives}
    Suppose Assumption~\ref{assumption:L_smooth} holds. Then
    \begin{equation}
    y_i := \frac{f(X+hZ_i) - f(X)}{h} = D_{Z_i}f(X) + h \eta_i 
    \label{eq:matrix_measurement},
\end{equation} 
where $|\eta_i| \leq \frac{L}{2}\| Z_i \|_\fro^2.$
\end{lemma}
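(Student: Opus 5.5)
The plan is to use a one-dimensional Taylor expansion along the segment from $X$ to $X+hZ_i$, combined with the Frobenius-norm Lipschitz bound in Assumption~\ref{assumption:L_smooth}. Define $\phi(t) = f(X + tZ_i)$ for $t\in[0,h]$. Since $f$ is differentiable, $\phi$ is differentiable with $\phi'(t) = \langle \nabla f(X+tZ_i), Z_i\rangle_\fro$. In particular, $\phi'(0) = D_{Z_i}f(X)$. By the fundamental theorem of calculus, which applies because $\phi'$ is continuous (indeed Lipschitz),
\begin{equation*}
f(X+hZ_i) - f(X) = \int_0^h \langle \nabla f(X+tZ_i), Z_i\rangle_\fro \, dt .
\end{equation*}

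Next, subtract $h\,D_{Z_i}f(X) = \int_0^h \langle \nabla f(X), Z_i\rangle_\fro\,dt$ from both sides. This gives
\begin{equation*}
f(X+hZ_i) - f(X) - h D_{Z_i}f(X) = \int_0^h \langle \nabla f(X+tZ_i) - \nabla f(X), Z_i\rangle_\fro \, dt .
\end{equation*}
Apply Cauchy--Schwarz for the Frobenius inner product to the integrand, and then Assumption~\ref{assumption:L_smooth}. The integrand is bounded in absolute value by $\|\nabla f(X+tZ_i)-\nabla f(X)\|_\fro \|Z_i\|_\fro \le L t \|Z_i\|_\fro^2$. Integrating over $[0,h]$ gives an absolute value of at most $\frac{L h^2}{2}\|Z_i\|_\fro^2$.

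Finally, define $\eta_i := \frac{1}{h^2}\big(f(X+hZ_i) - f(X) - hD_{Z_i}f(X)\big)$. Dividing the identity above by $h$ yields $y_i = D_{Z_i}f(X) + h\eta_i$, and the bound just obtained gives $|\eta_i|\le \frac{L}{2}\|Z_i\|_\fro^2$, as claimed.

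No step here is a real obstacle; this is the standard descent-lemma computation. The only point needing care is to use the Frobenius-norm smoothness of Assumption~\ref{assumption:L_smooth} rather than the nuclear-norm variant, so that Cauchy--Schwarz pairs the two Frobenius norms and produces the stated constant $\frac{L}{2}\|Z_i\|_\fro^2$.
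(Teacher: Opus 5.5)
Your proof is correct and follows essentially the same route as the paper: apply the fundamental theorem of calculus along the segment from $X$ to $X+hZ_i$, subtract $hD_{Z_i}f(X)$, bound the integrand by Cauchy--Schwarz and Frobenius $L$-smoothness, and integrate. The only cosmetic difference is that the paper parametrizes over $[0,1]$ via $\phi(\alpha)=f(X+\alpha hZ_i)-f(X)$ and carries $|h|$ throughout, which also covers $h<0$. You instead integrate over $[0,h]$ with $h>0$ tacitly assumed; for $h<0$ you would just replace $t$ by $|t|$ in your bound.
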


\begin{proof}[Proof of Lemma~\ref{lemma:directional_derivatives}]
    For any $i \in \{1,\ldots,d\}$ define
    \begin{equation*}
        \eta_i = \frac{f(X+hZ_i) - f(X) -
    h D_{Z_i}f(X)}{h^2}
    \end{equation*}
    so that $\frac{f(X+hZ_i) - f(X)}{h} =
    D_{Z_i}f(X) + h \eta_i$. To bound $\eta_i,$ consider $\phi:\R \to \R$ given by $\phi ( \alpha ) = f ( X + \alpha h Z_i ) - f ( X ).$ Applying the fundamental theorem of calculus and the chain rule, we find that
    \begin{equation*}
        f ( X + h Z_i ) - f ( X ) = \phi ( 1 ) - \phi ( 0 ) = \int_0^1 \phi' ( \alpha ) {\rm d} \alpha = \int_0^1 h D_{Z_i}f( X + \alpha h Z_i ){\rm d} \alpha.
    \end{equation*}
    Subtracting $hD_{Z_i}f ( X )$ from both sides, using the fact that
    \begin{equation*}
        D_{Z_i}f ( X ) = \langle Z_i,\nabla f(X)\rangle   
    \end{equation*}
    as $f$ is continuously differentiable, and taking absolute values:
    \begin{align*}
    | f ( X + h Z_i ) - f ( X ) -hD_{Z_i}f ( X ) | & \leq 
    | h | \int_0^1 | \langle Z_i, ( \nabla f ( X + \alpha h Z_i ) - \nabla f ( X ) \rangle_\fro | {\rm d} \alpha \\
    & \leq | h | \int_0^1 \| Z_i \|_\fro \| \nabla f ( X + \alpha h Z_i ) - \nabla f ( X ) \|_\fro {\rm d} \alpha \\
    & \leq | h | \int_0^1 \| Z_i \|_\fro ( L \| \alpha h Z_i \|_\fro ) {\rm d} \alpha \\
    & = |h|^2 \| Z_i \|_\fro^2 L \int_0^1 \alpha {\rm d} \alpha = |h|^2 \frac{L}{2} \| Z_i \|_\fro^2,
    \end{align*} 
    where we have applied the Cauchy-Schwarz inequality for the Frobenius inner product. Dividing by $| h |^2$ both sides, we find that $|\eta_i | \leq \frac{L}{2} \| Z_i \|_\fro^2.$ 
\end{proof}

When $Z_i$ is drawn from the Gaussian ensemble, $\| Z_i \|_\fro^2$ is easy to bound:

\begin{lemma}
\label{lemma:aux_bounds_eta}
    Suppose that each entry of each $Z_i$ is chosen i.i.d from $\mathcal{N}(0,1/d)$. Let $\boldsymbol{\eta} \in \mathbb{R}^d$ have entries $\eta_i$ as in Lemma~\ref{lemma:directional_derivatives}. Then
    \begin{equation*}
        |\eta_i| \leq \frac{Lmn}{d} \qquad \text{ with probability at least } 1 - 2 e^{-Cmn} 
    \end{equation*}
\end{lemma}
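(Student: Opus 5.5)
Lemma~\ref{lemma:directional_derivatives} already gives $|\eta_i| \leq \frac{L}{2}\|Z_i\|_\fro^2$. So the plan is to show that $\|Z_i\|_\fro^2 \leq \frac{2mn}{d}$ with probability at least $1-2e^{-Cmn}$. Under Assumption~\ref{assumption:sampling_directions}, $\|Z_i\|_\fro^2 = \frac{1}{d}\sum_{j=1}^{mn} g_j^2$ with $g_j$ i.i.d.\ $\mathcal{N}(0,1)$. Thus $d\|Z_i\|_\fro^2$ is a chi-squared random variable with $N = mn$ degrees of freedom and mean $mn$. The claim reduces to showing $\Pr\big(\sum_j g_j^2 \geq 2mn\big) \leq 2e^{-Cmn}$ for a universal constant $C>0$.

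For this tail bound I would use a standard concentration inequality for chi-squared variables. The cleanest route is Laurent--Massart:
\[
\Pr\Big(\textstyle\sum_j g_j^2 - N \geq 2\sqrt{Nt} + 2t\Big) \leq e^{-t}.
\]
Taking $t = cN$ with $c$ chosen so that $2\sqrt{c} + 2c \leq 1$ (for example $c = 1/16$, giving $1/2 + 1/8 < 1$) yields
\[
\Pr\Big(\textstyle\sum_j g_j^2 \geq 2N\Big) \leq e^{-N/16}.
\]
Alternatively, the two-sided Bernstein inequality for sub-exponential variables gives
\[
\Pr\Big(\big|\textstyle\sum_j g_j^2 - N\big| \geq N\Big) \leq 2e^{-CN},
\]
which matches the factor $2$ in the statement. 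That factor suggests the authors used the two-sided form, and I would use it too. The upper tail alone suffices for the argument.

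Combining the two pieces, on this event
\[
|\eta_i| \leq \frac{L}{2}\cdot\frac{2mn}{d} = \frac{Lmn}{d},
\]
which is the claim. Note the statement is per index $i$. If a bound holding simultaneously for all $i$ were wanted, a union bound would give probability $1 - 2de^{-Cmn}$, but that is not required here.

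The only real obstacle is getting the constants in the chi-squared tail right. The relevant deviation is of order $N$, which lies in the sub-exponential regime, so the exponent is linear in $mn$ and $C$ is universal. Everything else is direct substitution.
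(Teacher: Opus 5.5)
Your proposal is correct and follows the same route as the paper. You bound $|\eta_i|\le \tfrac{L}{2}\|Z_i\|_F^2$ via Lemma~\ref{lemma:directional_derivatives}, recognize $d\|Z_i\|_F^2$ as a sum of $mn$ squared standard Gaussians, and apply the two-sided Bernstein inequality to get $\sum_j g_j^2\le 2mn$ with probability at least $1-2e^{-Cmn}$ (your Laurent--Massart alternative is also valid and gives the explicit constant $C=1/16$).
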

\begin{proof}
    Observe that 
    \begin{equation}
        |\eta_i| \leq \frac{L}{2}\|Z_i\|_\fro^2 = \frac{L}{2} \sum_{k,\ell}^{m,n} [Z_i]_{k,\ell}^2 = \frac{L}{2d}\sum_{k,\ell}^{m,n} \tilde{Z}_{i,k,\ell}^2
        \label{eq:fist_bound_on_eta}
    \end{equation}
    where $\tilde{Z}_{i,k,\ell} \overset{iid}{\sim} \mathcal{N}(0,1)$. Thus, 
    \begin{equation*}
        \mathbb{E}\left[\sum_{k,\ell}^{m,n} \tilde{Z}_{i,k,\ell}^2\right] = mn, 
    \end{equation*}
    while
    \begin{equation*}
      \mathbb{P}\left[\left|\sum_{k,\ell}^{m,n} \tilde{Z}_{i,k,\ell}^2 - mn \right| \geq mn \right] \leq 2\exp(-Cmn)
    \end{equation*}
    by standard concentration of measure (e.g. Bernstein's inequality \cite[Corollary 2.8.3]{vershynin2018high}). Consequently
    \begin{equation}
        \mathbb{P}\left[\sum_{k,\ell}^{m,n} \tilde{Z}_{i,k,\ell}^2 \leq 2mn  \right] \geq 1 - 2\exp(-Cmn).
    \end{equation}
Combining this with \eqref{eq:fist_bound_on_eta} yields the stated bound.
\end{proof}

\begin{remark}
In Lemma~\ref{lemma:directional_derivatives}, it at first appears one could take $h\to 0$ and make the error arbitrarily small, but it is well-known that floating point errors make this impractical. For forward finite differences, the best balance of truncation and floating point error is achieved with $h\sim \sqrt{\epsilon_\text{machine}}$ where $\epsilon_\text{machine}$ is the machine epsilon~\citep{GillMurraySaundersWright83}.
In more detail, using $\hat{\phantom{g}}$ to denote floating point approximations, 
if we assume $|\hat{f}(X+hZ_i)-f(X+hZ_i)|\le \epsilon$ and 
$|\hat{f}(X)-f(X)|\le \epsilon$, 
and define $\hat{y}_i := \frac{\hat{f}(X+hZ_i) - \hat{f}(X)}{h}$, then the total error, including this floating point error (neglecting downstream floating point error, which is typically of smaller effect) and the truncation error of Lemma~\ref{lemma:directional_derivatives} gives
\[
\left| \hat{y}_i - D_{Z_i}f(X) \right| \le  h \eta_i + 2\frac{\epsilon}{h}
\]
and this error bound is minimized by choosing $h=\sqrt{\frac{2\epsilon}{\eta_i}}$. If $f(X)$ is $\mathcal{O}(1)$ then $\epsilon \approx \epsilon_\text{machine}$ and $\left| \hat{y}_i - D_{Z_i}f(X) \right| \lesssim 2\sqrt{2 \eta_i \epsilon_\text{machine}}$.
\end{remark}

However, the presence of an error term complicates the analysis of several of our proposed methods. Consequently, we leave this setting for future work.

\end{document}